\documentclass[lettersize,journal]{IEEEtran}
\usepackage{colortbl}
\usepackage[table,xcdraw]{xcolor} 
\definecolor{customblue}{rgb}{0.85, 0.89, 0.98} 
\usepackage{amsmath,amsfonts}
\usepackage{amssymb}
\usepackage{mathtools}
\usepackage{amsthm}
\usepackage{subcaption}
\usepackage{tikz}
\usetikzlibrary{calc}
\usepackage{algorithmic}
\usepackage{algorithm}

\usepackage{array}
\usepackage[caption=false,font=normalsize,labelfont=sf,textfont=sf]{subfig}
\usepackage{stfloats} 

\usepackage{placeins}
\setlength{\textfloatsep}{3pt plus 1pt minus 1pt}

\usepackage{cuted} 
\usepackage{textcomp}
\usepackage{stfloats}
\usepackage{url}
\usepackage{verbatim}
\usepackage{graphicx}

\usepackage{cite}
\hyphenation{op-tical net-works semi-conduc-tor IEEE-Xplore}
\DeclareMathOperator{\tr}{tr}
\DeclareMathOperator{\inv}{inv}

\DeclareMathOperator{\diag}{diag}
\newtheorem{definition}{Definition}
\newtheorem{theorem}{Theorem}
\theoremstyle{remark}
\newtheorem{remark}{\textbf{Remark}}

\usepackage{tabularx}
\usepackage{booktabs}
\usepackage{multirow}

\usepackage[font=scriptsize]{caption} 
\captionsetup[subfigure]{font=scriptsize} 

\usepackage[table]{xcolor}

\usepackage{colortbl}      

\begin{document}

\title{SPP-SBL: Space-Power Prior Sparse Bayesian Learning for Block Sparse Recovery}

\author{Yanhao Zhang,~\IEEEmembership{Graduate Student Member,~IEEE,}~~Zhihan Zhu,~\IEEEmembership{Graduate Student Member,~IEEE,} \\
	and Yong Xia,~\IEEEmembership{Member,~IEEE}
\thanks{This work was supported by the National Key R\&D Program of China (Grant No. 2021YFA1003303), National Natural Science Foundation of China (Grant No. 125B2016), and the Academic Excellence Foundation of BUAA for PhD Students. \textit{(Corresponding authors: Zhihan Zhu; Yong Xia.)}
	
	Yanhao Zhang, Zhihan Zhu and Yong Xia are with the LMIB of the Ministry of Education, School of Mathematical Sciences, Beihang University, Beijing 100191, China (e-mail: \{yanhaozhang, zhihanzhu, yxia\}@buaa.edu.cn).}

}



\maketitle

\begin{abstract}
The recovery of block-sparse signals with unknown structural patterns remains a fundamental challenge in structured sparse signal reconstruction. By proposing a variance transformation framework, this paper unifies existing pattern-based block sparse Bayesian learning methods, and introduces a novel space power prior based on undirected graph models to adaptively capture the unknown patterns of block-sparse signals. By combining the {Expectation-Maximization} algorithm with high-order equation root-solving, we develop a new structured sparse Bayesian learning method, SPP-SBL, which effectively addresses the open problem of space coupling parameter estimation in pattern-based methods. We further demonstrate that learning the relative values of space coupling parameters is key to capturing unknown block-sparse patterns and improving recovery accuracy. Experiments validate that SPP-SBL successfully recovers various challenging structured sparse signals (e.g., chain-structured signals and multi-pattern sparse signals) and real-world multi-modal structured sparse signals (images, audio), showing significant advantages in recovery accuracy across multiple metrics.
\end{abstract}

\begin{IEEEkeywords}
Compressed sensing, Space-Power prior, block sparsity, sparse Bayesian learning, expectation-maximization.
\end{IEEEkeywords}

\section{Introduction}\label{intro}
\IEEEPARstart{S}{parse}
 recovery through Compressed Sensing (CS) has garnered significant attention due to its robust theoretical foundation and wide-ranging applications \cite{donoho2006compressed}, particularly for its efficacy in reconstructing sparse vectors from a substantially reduced number of linear measurements. The general model with noise is represented as follows:
\begin{equation}
	\mathbf{y} = \mathbf{\Phi} \mathbf{x}+ \mathbf{n}, \label{basic CS}
\end{equation}
where  \(\mathbf{y} \in \mathbb{R}^{M \times 1}\) denotes the available measurement vector, \(\mathbf{\Phi} \in \mathbb{R}^{M \times N}\) is a known design matrix \cite{gorodnitsky1997sparse}, and \(\mathbf{x} \in \mathbb{R}^{N \times 1}\) is the sparse signal to be recovered, with \(N \gg M\). Additionally, \(\mathbf{n} \sim \mathcal{N}(0, \gamma^{-1} \mathbf{I})\) represents the additive Gaussian noise. In real-world applications, \(\mathbf{x}\) often exhibits transform sparsity \cite{donoho2006compressed}, meaning it becomes sparse in a suitable transform domain, such as Wavelet, Fourier transforms, etc. This leads to the representation \(\mathbf{x} = \boldsymbol{\Psi} \mathbf{w}\), where \(\mathbf{w}\) is sparse, and the sensing matrix \(\Theta = \boldsymbol{\Phi\Psi}\)  satisfies the Restricted Isometry Property (RIP) \cite{candes2005decoding}. Then we can simply replace $\mathbf{x}$ by $\boldsymbol{\Psi }\mathbf{w}$ in \eqref{basic CS} and solve it in the same way.  Therefore, for convenience, we will assume that \(\mathbf{x}\) inherently possesses a (block) sparse structure in the following discussion.

The above problem has been extensively studied, and numerous algorithms have been developed to provide effective sparse solutions. Classic methods include Lasso  \cite{tibshirani1996regression}, Basis Pursuit \cite{chen2001atomic}, Orthogonal Matching Pursuit (OMP) \cite{pati1993orthogonal}, Sparse Bayesian Learning (SBL) \cite{tipping2001sparse, ji2008bayesian}, etc. However, it has become increasingly clear that relying solely on the sparsity of \(\mathbf{x}\) is often insufficient, especially in scenarios with limited sample sizes \cite{eldar2010block,donoho2013accurate}. In practice, sparse signals often exhibit block-sparse structures, which can be leveraged to enhance sparse recovery performance. Block sparsity is generally defined by the occurrence of non-zero entries grouped into distinct blocks, with only a small subset of these blocks containing non-zero values. Formally, the structure of a block sparse signal \(\mathbf{x}\) with \(g\) blocks can be expressed as
\begin{equation}
	\mathbf{x} = [\mathbf{x}_1^T, \mathbf{x}_2^T, \ldots, \mathbf{x}_g^T]^T,
\end{equation}
where each block \(\mathbf{x}_i\in \mathbb{R}^{L_i}\) may vary in size, and it is assumed that only \(k\) out of the \(g\) blocks are non-zero (\(k \ll g\)).
 	Real-world signals, such as images, audio, and wireless channels, often exhibit block-sparse structures after appropriate transformations, where significant coefficients tend to occur in clustered groups rather than in isolation (see illustrative examples in Appendix~\ref{App:block}). 
 	 For example, natural images often exhibit block sparsity in the discrete wavelet transform (DWT) domain \cite{mishali2009blind, yu2012bayesian, he2009exploiting}, while audio signals typically show block-sparse structures in the discrete cosine transform (DCT) domain \cite{gribonval2003harmonic, NEURIPS2024_ead542f1}. Similarly, in wireless channel estimation, block-sparse patterns naturally emerge in the discrete Fourier transform (DFT) domain \cite{dai2018non}.
 	Block structures also reflect the inherent characteristics of many real-world applications, including gene expression analysis \cite{tibshirani2005sparsity}, radar imaging \cite{lv2014inverse}, and channel estimation \cite{srivastava2018quasi, wang2025near}, where physically or functionally related variables are naturally activated together.

Several approaches have been developed for the reconstruction of block-sparse signals, with early methods typically assuming known block partitions—commonly referred to as \textit{block-based} methods. Representative models such as Group Lasso \cite{yuan2006model}, Group Basis Pursuit \cite{van2011sparse}, Block-OMP \cite{eldar2010block} and Model-based CS \cite{baraniuk2010model}. 
These methods rely on predefined block structures with fixed dimensions, which are often difficult to specify accurately in practical applications.

Block-based formulations have also been extensively studied within Bayesian frameworks by extending classical element-wise sparse priors to the block level \cite{park2008bayesian,carvalho2010horseshoe} . In this line of work, Bayesian group sparsity models, such as the Bayesian Group Lasso \cite{raman2009bayesian} and the Group Horseshoe \cite{hernandez2013generalized}, impose block-level shrinkage and selection through hierarchical priors. Within the signal processing community, block-based Bayesian modeling has been further developed under the Sparse Bayesian Learning (SBL) framework. Representative block-based SBL variants, including Temporally Sparse Bayesian Learning (TSBL) \cite{zhang2011sparse} and Block Sparse Bayesian Learning (BSBL) \cite{zhang2013extension}, explicitly exploit intra-block correlations but still depend on a priori knowledge of block partitions. To alleviate this limitation, the recently proposed Diversified SBL (DivSBL) algorithm \cite{NEURIPS2024_ead542f1} reduces sensitivity to predefined block structures while remaining within the block-based modeling paradigm.

Another class of methods is \textit{pattern-based}, which aims to capture block-sparse structures without predefined block partitions by promoting clustered supports through local dependencies among neighboring elements. Early representatives include structure-aware greedy algorithms such as StructOMP \cite{huang2009learning}, which guides support selection via pattern-driven structural constraints.
Building upon the SBL framework, pattern-based Bayesian models replace explicit block partitions with local coupling mechanisms among neighboring coefficients. Pattern-Coupled Sparse Bayesian Learning (PC-SBL) \cite{fang2014pattern} can be viewed as the first pattern-based formulation within SBL,  introducing a novel framework that couples hyperparameters in the variance layer via a coupling parameter $\beta$, offering an intuitive and effective mechanism for capturing underlying block structures. The Alternative to Extended BSBL (A-EBSBL) method \cite{wang2018alternative} further established a link between pattern-based models and predefined block-based frameworks such as (E)BSBL by employing uniform weighting of neighboring variances (i.e., setting $\beta = 1$). However, the estimation of the coupling parameter $\beta$ remains an open and challenging problem \cite{dai2018non}, as its adaptive determination is generally difficult \cite{wang2018alternative, wang2020structured} and often requires task-specific tuning to fit the underlying block-sparse patterns \cite{sant2022block}.

To circumvent the direct estimation of the coupling parameter ${\beta}$, the Non-uniform Burst Sparsity algorithm \cite{dai2018non} discretizes ${\beta}$ into binary values (0 or 1) for estimation. Similarly, the Robust Cluster Structured SBL (RCS-SBL) method \cite{wang2020structured} reformulates ${\beta}$ as an indicator function $I_i$, which denotes whether coupling with neighboring elements is required and also takes binary values. Recently, the TV-SBL algorithm \cite{sant2022block} introduces total variation (TV) regularization at the hyperparameter level to avoid the need for direct estimation of ${\beta}$.


Therefore, estimating the coupling parameter in pattern-based algorithms remains an open problem. The inability to effectively estimate \(\vec{\boldsymbol{\beta}}\) not only leads to boundary overestimation \cite{dai2018non,wang2018alternative, wang2020structured} but also constrains the adaptability of these methods to more complex data structures, such as multi-pattern datasets \cite{sant2022block} and chain-structured datasets \cite{korki2016iterative}.

In this paper, we propose a unified variance transformation framework for pattern-based algorithms and introduce a space-power prior based on an undirected graph to perform Bayesian estimation of the coupling parameters in the transformation matrix. Notably, within this framework, each coupling parameter $\beta_i$
admits a unique positive real solution. Leveraging this property, we develop an efficient block sparse Bayesian learning algorithm, SPP-SBL, based on the expectation–maximization (EM) algorithm. This approach provides an effective estimation scheme for $\vec{\boldsymbol{\beta}}$, thereby addressing a long-standing open problem in the field.

Importantly, we observe that while the absolute values of the elements in $\vec{\boldsymbol{\beta}}$ have limited impact on sparse recovery performance, their relative magnitudes are crucial. Our results demonstrate that adaptively learning $\vec{\boldsymbol{\beta}}$ to reflect underlying structural patterns significantly outperforms existing methods that assume a fixed and shared $\beta$. Furthermore, by resolving the open problem of $\vec{\boldsymbol{\beta}}$ estimation, the proposed framework simultaneously addresses several core challenges in structured sparsity modeling, including boundary overestimation and limitations in handling multi-pattern or chain-structured data. Both theoretical analysis and experimental results substantiate the superiority of the proposed SPP-SBL algorithm.

The rest of the paper is organized as follows.
Section \ref{sec: variance} revisits existing pattern-based models from the perspective of a unified variance transformation framework. Within this framework, a specific transformation matrix—the Symmetric Diversified Coupling Matrix $\boldsymbol{T}_{\rm SPP}$, based on an undirected graph, is introduced.
Section \ref{SPP model} presents the space-power prior, which addresses the open problem of estimating coupling parameters in pattern-based method. The corresponding Bayesian posterior inference algorithm, along with a uniqueness theorem and an upper-bound theory for the coupling parameter estimation, is provided in Section \ref{inference}.
Section \ref{sec: discussion} discusses the benefits of accurate coupling parameter estimation, particularly in reducing boundary overestimation and overcoming limitations in estimating complex pattern signals.
Section \ref{experiment} reports experimental results. Conclusions and future works are summarized in Section \ref{conclusion}.

Notation: Throughout the paper, bold lowercase and uppercase letters denote vectors and matrices, respectively. The $i$-th element of a vector $\mathbf{x}$ is denoted by $x_i$. The notation  $\operatorname{diag}(\boldsymbol{\alpha})$ represents a diagonal matrix whose diagonal elements are given by the vector $\boldsymbol{\alpha}$. The symbol $\mathcal{N}(\cdot)$ denotes the multivariate Gaussian distribution.
To avoid ambiguity, an overhead arrow is used exclusively for the vector $\vec{\boldsymbol{\beta}}$ to distinguish it from the scalar $\beta$.

\section{Variance Transformation Framework} \label{sec: variance}

\subsection{Revisit Pattern-coupled Priors}

Before introducing the variance transformation framework, we first review four classical priors employed in pattern-based Bayesian models. Generally, these priors can be uniformly expressed as coupling priors, where the dependencies  are determined by adjacent elements within the hyperparameter \( \boldsymbol{\alpha} \):
\begin{equation}
	p(\mathbf{x} \mid \boldsymbol{\alpha}) = \prod_{i=1}^{N} p\left(x_i \mid \alpha_i, \alpha_{i+1}, \alpha_{i-1}\right).
\end{equation}
Different coupling schemes yield distinct models and algorithms. 
	For example, by connecting adjacent hyperparameters using one coupling parameter \(\beta\), 
PC-SBL:
	\begin{equation}
		p\left(x_i \mid \alpha_i, \alpha_{i+1}, \alpha_{i-1}\right)=\mathcal{N}\left( 0, \left(\alpha_i + \beta\,\alpha_{i+1} + \beta\,\alpha_{i-1}\right)^{-1}\right),\label{PC-SBL}
	\end{equation}
and  A-EBSBL:
	\begin{equation}
		p\left(x_i \mid \alpha_i, \alpha_{i+1}, \alpha_{i-1}\right)=\mathcal{N}\left( 0,\,\alpha_i^{-1} + \beta\,\alpha_{i+1}^{-1} + \beta\,\alpha_{i-1}^{-1}\right), \label{AEBSBL}
	\end{equation}
	are derived.
These two models intuitively couple at the variance layer. 
Additionally, the Non-Uniform Burst-Sparsity prior is designed as follows:
\begin{equation}
	\begin{aligned}
		&p\left(x_i \mid \alpha_i, \alpha_{i+1}, \alpha_{i-1}, \mathbf{z}_i\right)\\
		=&\left\{\mathcal{N}\left( 0,\,\alpha_{i-1}^{-1}\right)\right\}^{z_{i,1}}
		\cdot 
		\left\{\mathcal{N}\left( 0,\,\alpha_i^{-1}\right)\right\}^{z_{i,2}}
	\cdot\left\{\mathcal{N}\left( 0,\,\alpha_{i+1}^{-1}\right)\right\}^{z_{i,3}},\label{non-uniform}
	\end{aligned}
\end{equation}
where \(z_{i,j}\in\{0,1\}\) and \(\sum_{j=1}^{3} z_{i,j}=1\). We can perform a straightforward transformation of \eqref{non-uniform}, which reveals that it remains a variance-coupling model:
	\begin{align}
		\eqref{non-uniform}
		&\propto \exp\left(-\frac{1}{2}\,z_{i,1}\,\alpha_{i-1}\,x_i^2\right)
		\cdot \exp\left(-\frac{1}{2}\,z_{i,2}\,\alpha_i\,x_i^2\right)\notag\\
		&\quad\cdot \exp\left(-\frac{1}{2}\,z_{i,3}\,\alpha_{i+1}\,x_i^2\right)\notag\\
		&=\exp\left[-\frac{1}{2} \left(z_{i,1}\,\alpha_{i-1}+z_{i,2}\,\alpha_i+z_{i,3}\,\alpha_{i+1}\right)x_i^2\right]\notag\\
		&\propto \mathcal{N}\left( 0,\,\left(z_{i,2}\,\alpha_i+z_{i,3}\,\alpha_{i+1}+z_{i,1}\,\alpha_{i-1}\right)^{-1}\right),\label{Non-uniform}
	\end{align}
 where the coupling parameter is binary. Likewise, the  prior in RCS-SBL is as follows:
 	\begin{equation}
 	\begin{aligned}
 		&p(x_i \mid I_i)\\
 		=& \mathcal{N}\Big(0,\big(\alpha_{i-1}+\alpha_i+\alpha_{i+1}\big)^{-1}\Big)^{1\left[I_i=1\right]}
 		\mathcal{N}\Big(0,\,\alpha_i^{-1}\Big)^{1\left[I_i=0\right]}.\label{RCS-SBL}
\end{aligned}
\end{equation}
Similarly, we can reformulate it as
	\begin{align}
		\eqref{RCS-SBL}
		&\propto \exp\left(-\frac{1\left[I_i=1\right]}{2} \left(\alpha_{i-1}+\alpha_i+\alpha_{i+1}\right)x_i^2\right)\notag\\
		&\qquad \cdot \exp\left(-\frac{1\left[I_i=0\right]}{2} \alpha_i x_i^2\right)\notag\\
		&=\mathcal{N}\left( 0,\,\left(\alpha_i+1\left[I_i=1\right] (\alpha_{i-1}+\alpha_{i+1})\right)^{-1}\right).\label{RCS-SBL2}
	\end{align}
Here, the coupling parameter is an indicator function, where \( I_i = 1 \) signifies that information from both adjacent elements is incorporated, i.e., \( 1[\cdot] = 1 \); otherwise, it is 0. Consequently, the coupling parameter remains a binary variable, enforcing equal dependence on both neighboring elements.

Based on \eqref{PC-SBL}, \eqref{AEBSBL}, \eqref{Non-uniform}, and \eqref{RCS-SBL2}, we propose a \emph{variance transformation framework} for pattern-coupled Bayesian models.

\begin{figure*}[t]
	\centering
	\begin{footnotesize}
		\begin{equation}
			\hspace{-0.4mm}\boldsymbol{T}_1 =
			\begin{bmatrix}
				1     & \beta & 0     & \cdots     & 0     \\
				\beta & 1     & \beta & \cdots     & 0     \\
				0     & \beta & 1     & \cdots     & 0     \\
				\vdots& \vdots& \vdots& \ddots & \vdots\\
				0     & 0     & 0     & \cdots  &\beta \\
				0     & 0     & 0     & \cdots  & 1     \\
			\end{bmatrix}, 
			\boldsymbol{T}_2 =
			\begin{bmatrix}
				z_{12} & z_{13} & 0     & \cdots    & 0     \\
				z_{21} & z_{22} & z_{23} & \cdots     & 0     \\
				0      & z_{31} & z_{32} & \cdots     & 0     \\
				\vdots & \vdots & \vdots & \ddots &  \vdots\\
				0      & 0      & 0      & \cdots &  z_{N-1,3}\\
				0      & 0      & 0      & \cdots &  z_{N,2}   \\
			\end{bmatrix}, 
			\boldsymbol{T}_3 = \begin{bmatrix}
				1 & 1\left[I_1=1\right] & 0     & \cdots      & 0     \\
				1\left[I_2=1\right] & 1 & 1\left[I_2=1\right] & \cdots     & 0     \\
				0 & 1\left[I_3=1\right] & 1 & \cdots      & 0     \\
				\vdots & \vdots & \vdots & \ddots  & \vdots\\
				0 & 0 & 0 & \cdots  & 1\left[I_{N-1}=1\right] \\
				0 & 0 & 0 & \cdots  & 1     \\
			\end{bmatrix}. \label{T}
		\end{equation}
	\end{footnotesize}
	\vspace{-5mm}
\end{figure*}

\begin{figure*}[t]
	\centering
	\includegraphics[width=7.2in]{figs/undirected_graph2}
	\caption{Graph structures of pattern coupling. \textbf{(A)} The underlying graph structures corresponding to classical pattern-coupled models. \textbf{(B)} The proposed Space-Power-Prior (SPP) coupling model: \textbf{(B1)} uniform interactions between adjacent nodes, \textbf{(B2)} edge-specific parameters enabling boundary-aware learning, \textbf{(B3)} the resulting symmetric diversified coupling matrix \( \boldsymbol{T}_{\text{SPP}} \), and \textbf{(B4)} the overall Bayesian hierarchical model.}
	\label{directed graph}
\end{figure*}

\subsection{A Unified Variance Transformation Framework}

We begin by introducing an  inverse operator for vectors.
\begin{definition}
	For any vector \( \mathbf{w}\in\mathbb{R}^N \), we define the operator \(\inv(\mathbf{w})\) as
	\begin{equation}
		\inv(\mathbf{w}) = \left(w_1^{-1},\,w_2^{-1},\,\cdots,\,w_N^{-1}\right).
	\end{equation}
\end{definition}

From \eqref{PC-SBL}, \eqref{AEBSBL}, \eqref{Non-uniform}, and \eqref{RCS-SBL2}, we observe that the coupling prior is designed by applying a linear transformation to the second-order information (i.e., the variance or its inverse). Motivated by this observation, we introduce a unified framework for pattern-based Bayesian methods.

\textbf{The Variance Transformation Framework:}

Let \( \boldsymbol{T} \in \mathbb{R}^{N \times N} \) be the \emph{coupling matrix} (or \emph{transformation matrix}), which encodes the structured dependencies among variances. Then, the pattern-coupled sparse Bayesian priors can be formulated as:  
\begin{equation}
	p(\mathbf{x}\mid  \boldsymbol{\alpha}, \Theta(\boldsymbol{T}))=\mathcal{N}\left(\mathbf{0}, 	\boldsymbol{\Sigma}_{\text{prior}} \right),\label{vt1}
\end{equation}
where the covariance matrix is given by
\begin{equation}
	\boldsymbol{\Sigma}_{\text{prior}} = \diag\left(\inv\left(\boldsymbol{T}\boldsymbol{\alpha}\right)\right)
	\quad \text{or} \quad
	\diag\left(\boldsymbol{T}\inv\left(\boldsymbol{\alpha}\right)\right),\label{vt2}
\end{equation}
and $\Theta(\boldsymbol{T})$ represents the hyperparameters in $\boldsymbol{T}$. This formulation provides a systematic approach to modeling structured dependencies in sparse Bayesian learning.
The following examples illustrate specific coupling schemes within this framework, where $\boldsymbol{T}_1$ to $\boldsymbol{T}_3$
are defined in \eqref{T}.

\noindent Example 1 (PC-SBL):
\begin{equation}
	p(\mathbf{x} \mid \boldsymbol{\alpha}, \beta) = \mathcal{N}\left(\mathbf{0},\,\diag\left(\inv\left(\boldsymbol{T}_1\boldsymbol{\alpha}\right)\right)\right).
\end{equation}

\noindent  Example 2 (A-EBSBL):
\begin{equation}
	p(\mathbf{x} \mid \boldsymbol{\alpha}, \beta) = \mathcal{N}\left(\mathbf{0},\,\diag\left(\boldsymbol{T}_1\inv\left(\boldsymbol{\alpha}\right)\right)\right).
\end{equation}

\noindent Example 3 (Non-Uniform Burst Sparsity):
\begin{equation}
	p(\mathbf{x} \mid \boldsymbol{\alpha}, \Theta(\boldsymbol{T}_2)) = \mathcal{N}\left(\mathbf{0},\,\diag\left(\inv\left(\boldsymbol{T}_2\boldsymbol{\alpha}\right)\right)\right).
\end{equation}

\noindent Example 4 (RCS-SBL):
\begin{equation}
	p(\mathbf{x} \mid \boldsymbol{\alpha}, \Theta(\boldsymbol{T}_3)) = \mathcal{N}\left(\mathbf{0},\,\diag\left(\inv\left(\boldsymbol{T}_3\boldsymbol{\alpha}\right)\right)\right).
\end{equation}

These examples not only demonstrate the versatility of the unified variance transformation framework but also illustrate how different coupling matrices can capture diverse structural dependencies in the data.
Notably, the effectiveness of a pattern-based model is highly contingent upon the design of \( \boldsymbol{T} \). This raises a key question: \emph{Which structural design of \( \boldsymbol{T} \) is more appropriate for capturing the intrinsic correlations of block sparse signals?}

\subsection{The Symmetric Diversified Coupling Matrix $\boldsymbol{T}_{\rm SPP}$}


This section analyzes the spatial coupling relationships of block sparse signals from the perspective of graph structures. 

Fig. \ref{directed graph} (\textbf{A}) depicts the inherent graph structures corresponding to classical pattern-based models. As shown in \textbf{A1}, PC-SBL$\backslash$A-EBSBL essentially adopts an undirected graph, where the mutual influence between adjacent nodes is governed by a uniform coupling parameter, $\beta$. However, controlling all edges using the same $\beta$ is overly coarse. Moreover, as mentioned in Section \ref{intro}, even for a single coupling parameter $\beta$, its estimation remains an open problem, and it is typically set to $\beta=1$ by default.  

The models in \textbf{A2} and \textbf{A3} can be viewed as directed graphs. While these models introduce diversity in the coupling parameters between adjacent nodes, they remain limited in two key aspects. First, the coupling parameters are binary, imposing a restrictive structure. More importantly, the directed nature of these graphs results in asymmetric mutual influences between adjacent nodes, as indicated by the difference between the upper red solid lines and the corresponding lower blue dashed lines. This asymmetry is inconsistent with the principles of mutual influence.

Therefore, we propose a coupling mechanism where the mutual influence between adjacent nodes is symmetric, and the coupling parameter \( \beta \) is extended to a diversified vector \( 
\vec{\boldsymbol{\beta}}\in \mathbb{R}^{N-1} \). This formulation enables boundary-aware structure learning by adapting $\vec{\boldsymbol{\beta}}$, as illustrated in Fig.  \ref{directed graph}(\textbf{B}). Incorporating these two aspects, we introduce the symmetric diversified coupling matrix \( \boldsymbol{T}_{\text{SPP}} \) (SPP: Space-Power Prior, which will be discussed in Section \ref{SPP model}) as: 
	\begin{equation}   
		\boldsymbol{T}_{\rm SPP} =
		\begin{bmatrix}
			1     & \beta_1 & 0     & \cdots     &0& 0     \\
			\beta_1 & 1     & \beta_2 & \cdots    &0 & 0     \\
			0     & \beta_2 & 1     & \cdots  &0   & 0     \\
			\vdots& \vdots& \vdots& \ddots &\vdots& \vdots\\
			0     & 0     & 0     & \cdots  &1&\beta_{N-1} \\
			0     & 0     & 0     & \cdots &\beta_{N-1}  & 1     \\
		\end{bmatrix}.
	\end{equation}

Therefore, under the variance transformation framework induced by the coupling matrix \( \boldsymbol{T}_{\text{SPP}} \), the element \( x_i \) at position \( i \) depends on five hyperparameters: \( \alpha_i, \alpha_{i+1}, \alpha_{i-1}, \beta_{i-1}, \beta_i \).

\vspace{2mm}

\begin{remark}
	 Within the variance transformation framework, we argue that a coupling matrix \( \boldsymbol{T} \) satisfying (i) invertibility, (ii) symmetry, and (iii) appropriately diversified parameters provides a structurally coherent representation of spatial dependencies among nodes. The coupling matrix $\boldsymbol{T}_{\rm SPP}$ is designed to capture these dependencies while maintaining model parsimony. Nonetheless, alternative coupling matrix designs could be explored within this framework, though such extensions are out of the scope of this paper.
\end{remark}

\section{Space-Power Prior Bayesian model}\label{SPP model}

In this section, we propose a hierarchical Bayesian model based on \( \boldsymbol{T}_{\text{SPP}} \) coupling matrix for block sparse recovery. 

\subsection{Space-Power Prior}
In this model, each element \( x_i \) depends not only on the variance-layer hyperparameters \( \alpha_i, \alpha_{i+1}, \alpha_{i-1} \) but also on the coupling parameters \( \beta_{i-1}, \beta_i \). Specifically, let \( \vec{\boldsymbol{\beta}} \in \mathbb{R}^{N-1} \) with boundary conditions \( \alpha_0 = \alpha_{N+1} = 0 \), \( \beta_0 = \beta_N = 0 \),  then the prior of \( \mathbf{x} \) is given by:
\begin{equation}
	p(\mathbf{x} \mid \boldsymbol{\alpha},\vec{\boldsymbol{\beta}})=\prod_{i=1}^N p\left(x_i \mid \alpha_i, \alpha_{i+1}, \alpha_{i-1}, \beta_{i-1}, \beta_{i}\right),\label{spp1}
\end{equation}
where 
\begin{align}
	&p\left(x_i \mid \alpha_i, \alpha_{i+1}, \alpha_{i-1}, \beta_{i-1}, \beta_{i}\right) \notag\\
	=&\mathcal{N}\left(  0,\left(\alpha_i+\beta_{i-1} \alpha_{i-1}+\beta_i \alpha_{i+1}\right)^{-1}\right)\label{spp2}\\
	=&\{\mathcal{N}\left(0, \alpha_i^{-1}\right)\} \cdot\{\mathcal{N}\left(0, \alpha_{i-1}^{-1}\right)\}^{\beta_{i-1}} \cdot\{\mathcal{N}\left(0, \alpha_{i+1}^{-1}\right)\}^{\beta_{i}}.\label{spp3}
\end{align}
From \eqref{spp3}, the prior of \( x_i \) depends not only on its own hyperparameter \( \alpha_i \) but also on the neighboring hyperparameters \{\( \alpha_{i-1} \), \( \alpha_{i+1} \)\} in space, with the dependence regulated by the power terms \( \{\beta_{i-1}, \beta_{i}\} \). This is why we refer to the prior of \( x_i \) as the \textit{Space-Power prior} (SPP).  

Reformulating \eqref{spp1} \eqref{spp2} within the variance transformation framework \eqref{vt1} \eqref{vt2}, the prior of \( \mathbf{x} \) can be expressed as:  
\begin{equation}
	p(\mathbf{x} \mid \boldsymbol{\alpha}, \vec{\boldsymbol{\beta}}) = \mathcal{N}(\mathbf{0},  \diag(\inv(\boldsymbol{T}_{\rm SPP}\boldsymbol{\alpha}))).\label{prior_x}
\end{equation}

\begin{remark}
	When $\vec{\boldsymbol{\beta}} = \mathbf{0}$, the model simplifies to standard SBL, and when all \( \beta_i \) are identical, it reduces to PC-SBL.
\end{remark}

\vspace{2mm}

 Notably, \( \beta_i \) influences the prior variance through two terms: \( \beta_i \alpha_{i+1} \) in the variance of \( x_i \) and \( \beta_i \alpha_i \) in that of \( x_{i+1} \). Consequently, learning \( \beta_i \) effectively captures the mutual dependence between \( x_i \) and \( x_{i+1} \): 
As an illustrative example from \cite{fang2014pattern}, when \( \alpha_i \) approaches infinity, the corresponding coefficient \( x_i \) is forced to zero. Moreover, since \( \alpha_i \) also contributes to the priors of its neighbors \{\( x_{i+1} \), \( x_{i-1} \)\},  PC-SBL enforces a similar shrinkage effect on them. However, if \( x_{i+1} \) lies at the boundary of a nonzero block, this rigid coupling with a single $\beta$ may cause misidentify. In contrast, by learning \( \beta_i \), our model explicitly determines the extent of dependency between \( x_i \) and \( x_{i+1} \), allowing it to adaptively mitigate boundary misidentification. {Section \ref{sec: discussion}} will further discuss how this learning mechanism facilitates a boundary-aware structure for \( \mathbf{x} \).

We now present the prior design for the hyperparameters \( \boldsymbol{\alpha} \) and $\vec{\boldsymbol{\beta}}$. For \( \boldsymbol{\alpha} \), we follow a traditional sparse Bayesian learning approach and use a Gamma prior, defined as:
\begin{equation}
	p(\boldsymbol{\alpha})=\prod_{i=1}^{N}\text{Gamma}\left(\alpha_i \mid a,b\right)= \prod_{i=1}^{N}\frac{b^a}{\Gamma(a)}\alpha_i^{a-1} e^{-b\alpha_i},\label{prior_alpha}
\end{equation}
 where \( \Gamma(\cdot) \) is the Gamma function, and \( a \) and \( b \) are typically set to small values, such as $10^{-4}$, to ensure non-informativeness of the priors \cite{tipping2001sparse}. 
As discussed in \cite{tipping2001sparse}, this Gaussian-inverse hierarchical prior promotes a learning process that automatically deactivates most coefficients considered irrelevant, leaving only a few significant ones to explain the data. Therefore, the characterization of the block structure primarily depends on the design of the coupling parameters $\vec{\boldsymbol{\beta}}$.

In order to select an appropriate distribution for coupling parameter $\vec{\boldsymbol{\beta}}$ in Space-Power prior, we first highlight an important observation (which will be further discussed in {Sections \ref{inference}-\ref{experiment}}): As demonstrated in PC-SBL, the performance is insensitive to the specific value of \( \beta \) when a uniform value is applied across all coefficients. However, we find that the key to performance improvement lies in introducing \textit{relative differences} among \( \beta_i \), rather than merely tuning a single global \( \beta \) for all coefficients. Consequently, we assume that \( \beta_i \) follows an independent and identically distributed (i.i.d.) Gamma prior, i.e.,  
\begin{equation}
	p(\vec{\boldsymbol{\beta}})=\prod_{i=1}^{N-1}\text{Gamma}\left(\beta_i \mid c, d\right)= \prod_{i=1}^{N-1}\frac{d^c}{\Gamma(c)}\beta_i^{c-1} e^{-d\beta_i},\label{prior_beta}
\end{equation}
where the Gamma prior is conjugate and enforces the non-negativity of $\beta_i$. The structural patterns in the signals are captured primarily through the adaptive learning of $\beta_i$ within the SPP-SBL framework.


In summary, the overall structure of the Bayesian
hierarchical model is depicted in Fig. \ref{directed graph} (\textbf{B4}).

\subsection{Posterior Estimation}

Based on the observation model \eqref{basic CS}, the Gaussian likelihood is given by  
\begin{equation}
	p(\mathbf{y} \mid \mathbf{x}, \gamma)=\mathcal{N}(\boldsymbol{\Phi}\mathbf{x}, \gamma^{-1}\mathbf{I}),\label{likelihood}
\end{equation}  
where \( \gamma \) represents the inverse variance, which is typically assigned a Gamma prior, i.e., 
\begin{equation}
	p(\gamma)=\text{Gamma}(\gamma\mid g,h)=\frac{h^g}{\Gamma(g)}\gamma^{g-1} e^{-h\gamma}.\label{prior_gamma}
\end{equation}
With the prior \eqref{prior_x} and the likelihood \eqref{likelihood}, the posterior distribution of \( \mathbf{x} \) can be derived using Bayes' theorem as  
\begin{align}
	p(\mathbf{x} \mid \mathbf{y}, \boldsymbol{\alpha}, \vec{\boldsymbol{\beta}},\gamma) 
	&= \mathcal{N}(\boldsymbol{\mu}, \boldsymbol{\Sigma})\label{posterior_x}\\
	&\propto p(\mathbf{y} \mid \mathbf{x}, \gamma) p(\mathbf{x} \mid \boldsymbol{\alpha}, \vec{\boldsymbol{\beta}}),\notag
\end{align}  
where  
\begin{equation}
	\left\{
	\begin{array}{l}
		\boldsymbol{\mu} = \gamma\boldsymbol{\Sigma}\boldsymbol{\Phi}^T \mathbf{y}, \\  
		\boldsymbol{\Sigma} = \left(\gamma\boldsymbol{\Phi}^T\boldsymbol{\Phi}+\boldsymbol{\Lambda}\right)^{-1},\label{posterior}
	\end{array}
	\right.
\end{equation}  
with \( \boldsymbol{\Lambda} \triangleq \diag \left( \boldsymbol{T}_{\rm SPP} \boldsymbol{\alpha} \right) \). Thus, after estimating the hyperparameters \( \Theta = \{\boldsymbol{\alpha}, \vec{\boldsymbol{\beta}}, \gamma\} \), as described in Section \ref{inference}, the maximum a posteriori (MAP) estimate of \( \mathbf{x} \) corresponds to the mean of the posterior distribution, i.e.,  
\begin{equation}
	\mathbf{x}^{MAP}=\hat{\boldsymbol{\mu}}=\hat{\gamma}\hat{\boldsymbol{\Sigma}}\boldsymbol{\Phi}^T \mathbf{y}. \label{MAP}
\end{equation}

\section{Bayesian Inference: SPP-SBL Algorithm}\label{inference}

In this section, we employ Expectation Maximization (EM) formulation \cite{dempster1977maximum} to estimate the hyperparameters  \( \Theta = \{\boldsymbol{\alpha}, \vec{\boldsymbol{\beta}}, \gamma\} \) in posterior distribution.

Learning the hyperparameters essentially means finding the posterior mode that best matches the data, i.e., 
maximizing the posterior distribution \( p(\Theta\mid \mathbf{y}) \). Following the EM procedure, in the E-step, we treat \( \mathbf{x} \) as hidden variables and construct a lower bound of the log-posterior, i.e., the $Q$-function. Specifically, starting from the log-posterior:
\[
\log p(\Theta \mid \mathbf{y}) = \log \int p(\Theta, \mathbf{x} \mid \mathbf{y}) \, d\mathbf{x},
\]
we introduce the posterior of the latent variable $\mathbf{x}$ based on the previous estimate $\Theta^{(t-1)}$, i.e., $\mathbf{x} \sim p(\mathbf{x} \mid \mathbf{y}, \Theta^{(t-1)})$, and apply Jensen's inequality:
\begin{align}
	\log p(\Theta \mid \mathbf{y}) &= \log \int p(\Theta \mid \mathbf{x}, \mathbf{y}) p(\mathbf{x} \mid \mathbf{y}, \Theta^{(t-1)}) \, d\mathbf{x} \nonumber\\
	&\ge \int p(\mathbf{x} \mid \mathbf{y}, \Theta^{(t-1)}) \log p(\Theta \mid \mathbf{x}, \mathbf{y}) \, d\mathbf{x} \nonumber\\
	&= \mathbb{E}_{\mathbf{x} \mid \mathbf{y}, \Theta^{(t-1)}} [\log p(\Theta \mid \mathbf{x}, \mathbf{y})] \nonumber\\
	&\triangleq Q(\Theta \mid \Theta^{(t-1)}). \label{Q1}
\end{align}
Here, $Q(\Theta \mid \Theta^{(t-1)})$ is a lower bound of the log-posterior $\log p(\Theta \mid \mathbf{y})$, which also satisfies tangency condition. Maximizing it in the M-step ensures non-decreasing posterior values.
 To further simplify, it is observed that 
\begin{align}
	p(\Theta \mid \mathbf{x}, \mathbf{y})&\propto p(\boldsymbol{\alpha},\vec{\boldsymbol{\beta}} \mid \mathbf{x}) p(\gamma \mid \mathbf{y}, \mathbf{x}).\label{prop}
\end{align}
Therefore, \eqref{Q1} becomes 
\begin{align}
		Q(\Theta \mid \Theta^{(t-1)})=& \mathbb{E}_{\mathbf{x} \mid \mathbf{y}, \Theta^{(t-1)}} [\log p(\boldsymbol{\alpha},\vec{\boldsymbol{\beta}} \mid \mathbf{x})]\notag\\
		&+\mathbb{E}_{\mathbf{x} \mid \mathbf{y}, \Theta^{(t-1)}} [\log p(\gamma \mid \mathbf{y}, \mathbf{x})]+c,
\end{align}
where $c$ corresponds to the proportional constant  in \eqref{prop}. Thus, the \( Q \) function can be decomposed into two decoupled parts, which can be optimized separately. The first part is defined as \( Q(\boldsymbol{\alpha}, \vec{\boldsymbol{\beta}} \mid \boldsymbol{\alpha}^{(t-1)}, \vec{\boldsymbol{\beta}}^{(t-1)})\triangleq
\mathbb{E}_{\mathbf{x} \mid \mathbf{y}, \Theta^{(t-1)}} [\log p(\boldsymbol{\alpha},\vec{\boldsymbol{\beta}} \mid \mathbf{x})] \), and the second part is  \( Q(\gamma \mid \gamma^{(t-1)})\triangleq
\mathbb{E}_{\mathbf{x} \mid \mathbf{y}, \Theta^{(t-1)}} [\log p(\gamma \mid \mathbf{y}, \mathbf{x})] \). 
As for the first term, we have 
\begin{align}
	Q(\boldsymbol{\alpha}, \vec{\boldsymbol{\beta}} \mid \boldsymbol{\alpha}^{(t-1)}, \vec{\boldsymbol{\beta}}^{(t-1)})
	=&\mathbb{E}_{\mathbf{x} \mid \mathbf{y}, \Theta^{(t-1)}}\left[\log 
	p\left(\mathbf{x} \mid \boldsymbol{\alpha}, \vec{\boldsymbol{\beta}} \right)  \right]\notag\\
	+&\log p(\boldsymbol{\alpha})+\log p(\vec{\boldsymbol{\beta}})+c_1.\label{Q_split1}
\end{align}
Here, $c_1$ is a constant independent of $\boldsymbol{\alpha}, \vec{\boldsymbol{\beta}}$. Specifically, the expectation term in \eqref{Q_split1} can be expressed as
\allowdisplaybreaks
\begin{align}
	&\mathbb{E}_{\mathbf{x} \mid \mathbf{y}, \Theta^{(t-1)}}\left[\log 
	p\left(\mathbf{x} \mid \boldsymbol{\alpha}, \vec{\boldsymbol{\beta}} \right)  \right] \notag\\
	=& \frac{1}{2} \sum_{i=1}^{N} \bigg[ \log (\alpha_i + \beta_{i-1} \alpha_{i-1} + \beta_i \alpha_{i+1})  - (\alpha_i + \beta_{i-1} \alpha_{i-1}  \notag\\
	&+ \beta_i \alpha_{i+1}) 
	\int p(\mathbf{x}\mid \mathbf{y}, \boldsymbol{\alpha}^{(t-1)}, \vec{\boldsymbol{\beta}}^{(t-1)}) x_i^2 d \mathbf{x} \bigg].\label{simQ1}
\end{align}
According to \eqref{posterior_x}, the integral term above corresponds to the second-order moment of the \( i \)th component in a multivariate normal distribution, i.e.,
\begin{equation}
	\int p(\mathbf{x}\mid \mathbf{y}, \boldsymbol{\alpha}^{(t-1)}, \vec{\boldsymbol{\beta}}^{(t-1)}) x_i^2 d \mathbf{x} =\mathbb{E}_{\mathbf{x} \mid \mathbf{y}, \Theta^{(t-1)}}[x_i^2]= \hat{\mu}_i^2 + \hat{\Sigma}_{ii},\label{moment}
\end{equation}
where \( \hat{\mu}_i \) denotes the \( i \)th element of \( \hat{\boldsymbol{\mu}} \), and \( \hat{\Sigma}_{ii} \) represents the \( (i,i) \)th entry of \( \hat{\boldsymbol{\Sigma}} \). According to \eqref{prior_alpha}, \eqref{prior_beta}, \eqref{simQ1}, and \eqref{moment}, the first term of the $Q$ function in \eqref{Q_split1} becomes
\begin{align}
	&Q(\boldsymbol{\alpha}, \vec{\boldsymbol{\beta}} \mid \boldsymbol{\alpha}^{(t-1)}, \vec{\boldsymbol{\beta}}^{(t-1)})\notag \\
	=& \sum_{i=1}^{N} \bigg[ (a-1) \log \alpha_i - b \alpha_{i} + (c-1) \log \beta_{i} - d\beta_{i} \notag \\
	&\quad - \frac{1}{2} (\alpha_i + \beta_{i-1} \alpha_{i-1} + \beta_i \alpha_{i+1}) 
	\left(\hat{\mu}_i^2 + \hat{\Sigma}_{ii} \right) \notag \\
	&\quad + \frac{1}{2} \log (\alpha_i + \beta_{i-1} \alpha_{i-1} + \beta_i \alpha_{i+1})\bigg].\label{Q1 final}
\end{align}
For the second term of  $Q$ function, recalling (26), we obtain
\begin{align}
	Q(\gamma \mid \gamma^{(t-1)})=&\mathbb{E}_{\mathbf{x} \mid \mathbf{y}, \Theta^{(t-1)}} [\log p(\gamma \mid \mathbf{y}, \mathbf{x})]\notag\\
	=&\mathbb{E}_{\mathbf{x} \mid \mathbf{y}, \Theta^{(t-1)}}\left[\log p(\gamma)+\log p(\mathbf{y} \mid \mathbf{x},\gamma)\right]+c_2\notag\\
	=& -\frac{\gamma}{2} \mathbb{E}_{\mathbf{x} \mid \mathbf{y}, \Theta^{(t-1)}}\left[\vert| \mathbf{y}-\boldsymbol{\Phi} \mathbf{x}\vert|_2^2\right]+\frac{M}{2}\log \gamma \notag\\
	&~+g \log \gamma -h\gamma . \label{Q2}
\end{align}

In M-step, we separately maximize the above $Q$ functions in \eqref{Q1 final} and \eqref{Q2} to get the estimation of $\Theta$, i.e.,
\begin{align}
	&\boldsymbol{\alpha}^{(t)} = \arg \max_{\boldsymbol{\alpha}}~Q(\boldsymbol{\alpha}, \vec{\boldsymbol{\beta}}^{(t-1)} \mid \boldsymbol{\alpha}^{(t-1)}, \vec{\boldsymbol{\beta}}^{(t-1)}), \label{Max_alpha}\\
&	\vec{\boldsymbol{\beta}}^{(t)} = \arg \max_{\vec{\boldsymbol{\beta}}}~Q(\boldsymbol{\alpha}^{(t)}, \vec{\boldsymbol{\beta}} \mid \boldsymbol{\alpha}^{(t-1)}, \vec{\boldsymbol{\beta}}^{(t-1)}), \label{Max_beta}\\
& \gamma^{(t)} =  \arg \max_{\gamma}~Q(\gamma \mid \gamma^{(t-1)}).\label{Max_gamma}
\end{align}

\paragraph{Update $\boldsymbol{\alpha}$} 

To solve \eqref{Max_alpha}, we note that, unlike conventional  sparse Bayesian learning, where each hyperparameter can be updated independently due to the separable structure of the cost function, the hyperparameters in our case are coupled through the logarithmic term $\log (\alpha_i + \beta_{i-1} \alpha_{i-1} + \beta_i \alpha_{i+1})$, making analytical optimization intractable. Although gradient-based methods can be applied, they lack closed-form updates and incur higher computational complexity. To address these limitations, we follow the analysis in  \cite{fang2014pattern}, adopting an alternative strategy that seeks a simple, analytical sub-optimal solution by examining the optimality conditions, while ensuring monotonic improvement of the objective function during iterations. 

For brevity, we denote $Q(\boldsymbol{\alpha}, \vec{\boldsymbol{\beta}} \mid \boldsymbol{\alpha}^{(t-1)}, \vec{\boldsymbol{\beta}}^{(t-1)})$ by $Q\left(\boldsymbol{\alpha}, \vec{\boldsymbol{\beta}}\right)$ hereafter.  Let $\xi_i \triangleq  (\alpha_i+\beta_{i-1} \alpha_{i-1}+\beta_i\alpha_{i+1})^{-1}  $, and
\begin{align}
	\eta_i \triangleq &\left(\mu_i^2 + \Sigma_{ii} \right) + \beta_i \left(\mu_{i+1}^2 + \Sigma_{i+1,i+1} \right)  \notag \\
	&~ +\beta_{i-1} \left(\mu_{i-1}^2 + \Sigma_{i-1,i-1} \right), \label{update_eta}
\end{align}
the first-order derivative of \eqref{Q1 final} becomes
	\begin{align*}
	\frac{\partial Q\left(\boldsymbol{\alpha}, \vec{\boldsymbol{\beta}}\right)}{\partial \alpha_{i}}= \frac{a-1}{\alpha_i}-b+\frac{1}{2}\xi_i +\frac{\beta_i}{2}\xi_{i+1} +\frac{\beta_{i-1}}{2}\xi_{i-1}-\frac{1}{2}\eta_i.
\end{align*}
Suppose $\boldsymbol{\alpha}^*$
is a stationary point, satisfying the first-order optimality condition, i.e.,
\begin{equation}
	\left.\frac{\partial Q(\boldsymbol{\alpha}, \vec{\boldsymbol{\beta}})}{\partial \boldsymbol{\alpha}} \right|_{\boldsymbol{\alpha}=\boldsymbol{\alpha}^*}=\mathbf{0}.
\end{equation}
Then, for $\forall i=1,\cdots, N$, we have
\begin{align}
	& \frac{a-1}{\alpha_i^*}+\frac{1}{2}\left(\xi_i^*+\beta_{i} \xi_{i+1}^*+\beta_{i-1}\xi_{i-1}^*\right) = \frac{1}{2}\eta_i +b\label{big}.
\end{align}
Since $\left\{\alpha_i, \beta_i\right\}\ge 0 $,  it follows that $(\alpha_i^*)^{-1}>\xi_i^*,$ $ (\beta_{i-1}\alpha_{i-1}^*)^{-1}>\xi_i^*,$ and $ (\beta_{i}\alpha_{i+1}^*)^{-1}>\xi_i^*$. Therefore the left hand side of \eqref{big} can be bounded as
\begin{equation*}
	\frac{a-1}{\alpha_i^*}\le LHS \le 	\frac{a+0.5}{\alpha_i^*},
\end{equation*}
which concludes that 
\begin{equation*}
	\alpha_i^* \in \left[\frac{a-1}{b+0.5\eta_i}, \frac{a+0.5}{b+0.5\eta_i}\right].
\end{equation*}
We finally adopt the analytical solution to \eqref{Max_alpha} as
\begin{equation}
	\alpha_i = \frac{a+0.5}{b+0.5\eta_i}, ~\forall i=1,\cdots,N. \label{update_alpha}
\end{equation}
where \( a \) and \( b \) are typically set to small values (e.g., $10^{-4}$). 
It can be seen that the update form in \eqref{update_alpha} is structurally identical to that of classical SBL ((48) in \cite{tipping2001sparse}), with the only difference lying in the definition of \(\eta_i\). In classical SBL, \(\eta_i = \mu_i^2 + \Sigma_{ii}\), whereas in our formulation, \(\eta_i\) is defined in \eqref{update_eta} as a weighted combination of neighboring terms, incorporating \(\beta_i\) and \(\beta_{i+1}\). This difference underscores the critical role of learning \(\beta_i\) in capturing structured sparsity.

\vspace{2mm}

\paragraph{Update $\vec{\boldsymbol{\beta}}$}
To derive a closed-form update for $\vec{\boldsymbol{\beta}}$, we directly solve for the first-order stationary point of \eqref{Max_beta} as
\begin{align}
	\frac{\partial Q\left(\boldsymbol{\alpha}, \vec{\boldsymbol{\beta}}\right)}{\partial \beta_{i}}&=\frac{c-1}{\beta_i}-d +\frac{1}{2} \left(\alpha_{i+1}\xi_i + \alpha_{i}\xi_{i+1} \right)\notag\\
	&-\frac{1}{2}\left[\alpha_{i+1}\left(\hat{\mu}_i^2+\hat{\phi}_{ii}\right)+\alpha_i \left(\hat{\mu}_{i+1}^2+\hat{\phi}_{i+1,i+1}\right)\right]\label{cubic} 
\end{align}
Accordingly, the problem is transformed into finding the roots of the above equation in $\beta_i$. We first analyze the properties of the equation \eqref{cubic}, and present the following two theorems to facilitate root-finding.

\begin{theorem}\label{Thm1}
	When \( c > 1 \), the equation \eqref{cubic} for \( \beta_i \) ( \(\forall  i \)) exists only one positive real root.
\end{theorem}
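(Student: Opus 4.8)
The plan is to recast the stationary condition $\partial Q/\partial\beta_i = 0$ coming from \eqref{cubic} as a scalar fixed-point equation $f(\beta_i) = E$, and then prove that $f$ is a strictly decreasing bijection of $(0,\infty)$ onto $(0,\infty)$ whenever $c>1$, so that the target value $E$ is attained exactly once. Collecting the three $\beta_i$-dependent terms of \eqref{cubic} on the left and the constants on the right, and abbreviating $A_i \triangleq \alpha_i + \beta_{i-1}\alpha_{i-1}$ and $B_{i+1}\triangleq \alpha_{i+1}+\beta_{i+1}\alpha_{i+2}$ so that $\xi_i = (A_i+\beta_i\alpha_{i+1})^{-1}$ and $\xi_{i+1}=(B_{i+1}+\beta_i\alpha_i)^{-1}$, the equation reads
\begin{equation*}
f(\beta_i) \triangleq \frac{c-1}{\beta_i} + \frac{1}{2}\frac{\alpha_{i+1}}{A_i+\beta_i\alpha_{i+1}} + \frac{1}{2}\frac{\alpha_i}{B_{i+1}+\beta_i\alpha_i} = E,
\end{equation*}
where $E \triangleq d + \tfrac{1}{2}\big[\alpha_{i+1}(\hat\mu_i^2+\hat\Sigma_{ii}) + \alpha_i(\hat\mu_{i+1}^2+\hat\Sigma_{i+1,i+1})\big]$. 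A first useful observation is that $E>0$: the prior rate $d>0$, the posterior second moments in \eqref{moment} are non-negative, and the $\alpha$-iterates produced by \eqref{update_alpha} are strictly positive (since $a+0.5>0$ and $b+0.5\eta_i\ge b>0$). This same positivity gives $A_i\ge\alpha_i>0$ and $B_{i+1}\ge\alpha_{i+1}>0$, so both rational terms in $f$ are finite and well-defined for every $\beta_i\ge 0$.

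Next I would establish strict monotonicity. Differentiating term by term,
\begin{equation*}
f'(\beta_i) = -\frac{c-1}{\beta_i^2} - \frac{1}{2}\frac{\alpha_{i+1}^2}{(A_i+\beta_i\alpha_{i+1})^2} - \frac{1}{2}\frac{\alpha_i^2}{(B_{i+1}+\beta_i\alpha_i)^2},
\end{equation*}
and the hypothesis $c>1$ makes the first term strictly negative while the remaining two are manifestly non-positive; hence $f'(\beta_i)<0$ throughout $(0,\infty)$ and $f$ is strictly decreasing. The limiting behaviour then pins down the range: as $\beta_i\to 0^+$ the leading term $(c-1)/\beta_i\to+\infty$ while the two rational terms tend to the finite values $\alpha_{i+1}/(2A_i)$ and $\alpha_i/(2B_{i+1})$, so $f(\beta_i)\to+\infty$; as $\beta_i\to+\infty$ all three terms vanish, so $f(\beta_i)\to 0^+$.

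I would then conclude by the intermediate value theorem together with strict monotonicity: $f$ is a continuous strictly decreasing map of $(0,\infty)$ onto $(0,\infty)$, so for the fixed target $E\in(0,\infty)$ there is exactly one $\beta_i>0$ with $f(\beta_i)=E$, which is the asserted unique positive real root. To match the statement literally, clearing denominators shows that on $\beta_i>0$ the identity $f(\beta_i)=E$ is equivalent to the polynomial (cubic) form of \eqref{cubic}; since the cleared factors $A_i+\beta_i\alpha_{i+1}$ and $B_{i+1}+\beta_i\alpha_i$ have no zeros for $\beta_i>0$, no spurious positive roots are introduced, and the cubic therefore has a single positive real root as well. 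The crux of the argument — and the precise place where $c>1$ is indispensable — is the blow-up of $f$ at $\beta_i\to 0^+$: only the sign of $c-1$ decides whether $(c-1)/\beta_i$ diverges to $+\infty$, stays bounded ($c=1$), or diverges to $-\infty$ ($c<1$), and it is exactly the $+\infty$ case that forces the range of the decreasing function to cover all admissible targets $E$, thereby securing existence in addition to uniqueness. Consequently, the main point to verify with care is the strict positivity of the denominators $A_i,B_{i+1}$ and of $E$, which ultimately rests on the $\alpha$-iterates remaining positive.
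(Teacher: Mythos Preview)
Your proof is correct and follows essentially the same route as the paper: both arguments rewrite the stationary equation in the rational form
\[
f(\beta_i)=\frac{c-1}{\beta_i}+\frac{1}{2}\,\frac{\alpha_{i+1}}{A_i+\beta_i\alpha_{i+1}}+\frac{1}{2}\,\frac{\alpha_i}{B_{i+1}+\beta_i\alpha_i}
\]
and use the strict monotonicity of $f$ on $(0,\infty)$ (driven by $c>1$) to establish uniqueness of the positive root. The only genuine difference lies in the \emph{existence} step. The paper first clears denominators to obtain the cubic \eqref{simplify} and then applies Vieta's formula, observing that the product of the three roots equals $-\tfrac{(1-c)A_iB_{i+1}}{(B+d)\alpha_i\alpha_{i+1}}>0$, which forces at least one positive real root. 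You instead stay with the rational form and read off the boundary behaviour $f(0^+)=+\infty$, $f(+\infty)=0^+$, invoking the intermediate value theorem against the positive target $E$. Your route is slightly more elementary---it avoids the polynomial manipulation and the case distinction for complex conjugate roots---while the paper's Vieta-based argument has the side benefit of making the cubic structure explicit, which it immediately reuses for the closed-form Cardano updates in \eqref{posterior_beta}.
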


\begin{proof}
	For notational simplicity, here we denote $A \triangleq \alpha_i+\beta_{i-1} \alpha_{i-1}$, $E \triangleq \alpha_{i+1}+\beta_{i+1}\alpha_{i+2}$ and $B \triangleq \frac{1}{2}\left[\alpha_{i+1}\left(\hat{\mu}_i^2+\hat{\phi}_{ii}\right)+\alpha_i \left(\hat{\mu}_{i+1}^2+\hat{\phi}_{i+1,i+1}\right)\right]$. Then, 
	\eqref{cubic} can be rewritten as
	\begin{align}
		&\frac{c-1}{\beta_i}-d+\frac{1}{2}\left[\frac{\alpha_{i+1}}{A+\beta_i\alpha_{i+1}}+\frac{\alpha_{i}}{E+\beta_i\alpha_{i}}\right]-B=0, \label{original cubic}\\
		\Leftrightarrow &~ \left[2(B+d)\alpha_i\alpha_{i+1}\right]\beta_{i}^3 +\left[2(B+d)(A\alpha_i+E\alpha_{i+1})\right. \notag\\
		&\left. -2c\alpha_i \alpha_{i+1}\right] \beta_{i}^2+\left[(1-2c)(A\alpha_i +E \alpha_{i+1})\right. \notag\\
		&\left. +2(B+d)AE\right]\beta_{i}+2(1-c)AE = 0 \label{simplify}
	\end{align}
	Let $\beta_i^{(1)}, \beta_i^{(2)}, \beta_i^{(3)}$ denote the three roots of \eqref{cubic}. Based on the relationship between the roots and coefficients of the cubic equation, we have, for $c>1$,
	\begin{equation}
		\beta_i^{(1)}\beta_i^{(2)}\beta_i^{(3)} = -\frac{(1-c)AE}{(B+d)\alpha_i\alpha_{i+1}}>0.
	\end{equation}
   This implies that \eqref{cubic} admits at least one positive real root (if there are two complex roots, their product must be a positive real value).
	
	To prove the uniqueness of this real root, we examine the monotonicity of  \eqref{original cubic}, i.e.,
	\begin{equation*}
		f(\beta_i) \triangleq \frac{c-1}{\beta_i}-d+\frac{1}{2}\left[\frac{\alpha_{i+1}}{A+\beta_i\alpha_{i+1}}+\frac{\alpha_{i}}{E+\beta_i\alpha_{i}}\right]-B.
	\end{equation*}
	  For \( \beta_i \geq 0 \), \( f(\beta_i) \) is strictly decreasing as \( \beta_i \) increases. Thus, the uniqueness of the positive real root is established.
\end{proof}

\begin{remark}
	The relationship between the coupling parameter $\beta_i$ and the neighboring precision parameters $\alpha_i$ and $\alpha_{i+1}$ can be characterized through the relationship between the roots and coefficients of a cubic equation, as shown in \eqref{original cubic} and \eqref{simplify}. It can be observed that the dependence of the coupling parameter on the variance-layer parameters is not governed by a simple monotonic heuristic, but is inherently nonlinear. In particular, the coupling parameter $\beta_i$ does not characterize the structure of the target signal merely based on their absolute values, but rather on their relative values, i.e., the learning of local fluctuations. When the elements lie within a nonzero block, the signal amplitudes vary across positions, leading to heterogeneous variance-layer parameters. Consequently, $\beta_i$, as a root of the corresponding cubic equation, also exhibits noticeable variability. In contrast, when the elements lie outside nonzero blocks, the variance-layer parameters associated with zero entries tend to take nearly identical values, and accordingly, $\beta_i$, as the root of the corresponding cubic equation, exhibits a high degree of consistency. Based on this relationship, the coupling parameters can be jointly learned with the variance-layer parameters $\alpha_i$, thereby effectively capturing the underlying structure of the target signal. The visualization results in Fig. \ref{fig:beta} further corroborate this observation.
\end{remark}

Theorem \ref{Thm1} ensures that solving \eqref{cubic} yields a unique positive real solution for \(\beta_i\). Therefore, we find the closed-form solution of a cubic equation and select the unique positive real root as the estimate for coupling parameters \(\vec{\boldsymbol{\beta}}\).

According to \eqref{simplify}, let $\tilde{a}=2(B+d)\alpha_i\alpha_{i+1}$, $\tilde{b} =2(B+d)(A\alpha_i+E\alpha_{i+1})-2c\alpha_i \alpha_{i+1}$, $\tilde{c} = 2(B+d)AE+(1-2c)(A\alpha_i +E \alpha_{i+1})$, $\tilde{d}=2(1-c)AE $. 
Then \eqref{simplify} can be rewritten as the standard cubic form:
\begin{equation*}
	\tilde{a} \beta_i^3 + \tilde{b} \beta_i^2 + \tilde{c} \beta_i + \tilde{d} = 0.
\end{equation*}
Dividing both sides by \(\tilde{a}\) and substituting:
\begin{equation}
\beta_i = \tilde{\beta}_i - \tilde{b}/(3\tilde{a}), \label{kadan}
\end{equation}
the equation can be transformed into the depressed cubic form:
\begin{equation*}
\tilde{\beta}_i^3 + p \tilde{\beta}_i + q = 0,
\end{equation*}
where $p=(3\tilde{a}\tilde{c}-\tilde{b}^2)/(3\tilde{a}^2)$, $q=(27\tilde{a}^2\tilde{d}-9\tilde{a}\tilde{b}\tilde{c}+2\tilde{b}^3)/(27\tilde{a}^3)$. Then, using Cardano's formula \cite{abramowitz1965handbook}, let $\Delta = \left(\frac{q}{2}\right)^2 + \left(\frac{p}{3}\right)^3$ and $\omega =\frac{1}{2}(-1+\sqrt{3}j)$. The three roots of $\tilde{\beta}_i$ are given by
	\begin{align}
		&\tilde{\beta}_i^{(1)}=\sqrt[3]{-\frac{q}{2}+\sqrt{\Delta}}+\sqrt[3]{-\frac{q}{2}-\sqrt{\Delta}}, \\
		&\tilde{\beta}_i^{(2)}=\omega \sqrt[3]{-\frac{q}{2}+\sqrt{\Delta}}+\omega^2 \sqrt[3]{-\frac{q}{2}-\sqrt{\Delta}} ,\\
	&	\tilde{\beta}_i^{(3)}=\omega^2 \sqrt[3]{-\frac{q}{2}+\sqrt{\Delta}}+\omega \sqrt[3]{-\frac{q}{2}-\sqrt{\Delta}}.
	\end{align}
According to \eqref{kadan}, three roots of $\beta_i$ are thus given by
\begin{equation}
\beta_i^{(1)} = \tilde{\beta}_i^{(1)}-\frac{\tilde{b}}{3\tilde{a}},~
\beta_i^{(2)} = \tilde{\beta}_i^{(2)}-\frac{\tilde{b}}{3\tilde{a}}, ~
\beta_i^{(3)} = \tilde{\beta}_i^{(3)}-\frac{\tilde{b}}{3\tilde{a}}. \label{posterior_beta}
\end{equation}
Thus, we select the unique positive real root from \eqref{posterior_beta} as the estimate for coupling parameters.
The following result establishes bounds on $\beta_i$.
\begin{theorem}\label{Thm2}
	The unique positive real root of  \eqref{cubic} satisfies 
	\begin{equation}
		0 < \beta_i < \frac{c}{d}, ~~~~\forall i=1,\cdots,N.
	\end{equation}
\end{theorem}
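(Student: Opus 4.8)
The plan is to leverage the strict monotonicity of the auxiliary function already established in the proof of Theorem~\ref{Thm1}, rather than to work with the cubic coefficients directly. Recall that
\[
f(\beta_i) = \frac{c-1}{\beta_i} - d + \frac{1}{2}\left[\frac{\alpha_{i+1}}{A + \beta_i\alpha_{i+1}} + \frac{\alpha_i}{E + \beta_i\alpha_i}\right] - B
\]
is strictly decreasing on $(0,\infty)$ and that its unique positive root is exactly the $\beta_i$ we wish to bound. The lower bound $0 < \beta_i$ is then immediate from Theorem~\ref{Thm1}. For the upper bound it suffices to show $f(c/d) \le 0$, since strict monotonicity forces the root to lie to the left of any point at which $f$ is nonpositive.

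Concretely, I would substitute $\beta_i = c/d$ into $f$. The first two terms collapse cleanly, since $\frac{c-1}{c/d} - d = d - \frac{d}{c} - d = -\frac{d}{c}$. For the bracketed terms I would use the non-negativity of $A$ and $E$ (they are sums of products of non-negative hyperparameters) to bound each fraction: $\frac{\alpha_{i+1}}{A + (c/d)\alpha_{i+1}} \le \frac{\alpha_{i+1}}{(c/d)\alpha_{i+1}} = \frac{d}{c}$, and likewise $\frac{\alpha_i}{E + (c/d)\alpha_i} \le \frac{d}{c}$. After the factor $\tfrac12$, the whole bracket contributes at most $\frac{d}{c}$, so that
\[
f(c/d) \;\le\; -\frac{d}{c} + \frac{d}{c} - B \;=\; -B \;\le\; 0,
\]
where the final inequality uses $B \ge 0$, which holds because $B$ is a non-negative combination of the second-order moments $\hat{\mu}_j^2 + \hat{\phi}_{jj}$.

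To upgrade this to the strict inequality stated in the theorem, I would observe that $B > 0$ under the standing assumptions of the model: the posterior covariance in \eqref{posterior} is positive definite, so its diagonal entries satisfy $\hat{\phi}_{jj} > 0$, and the precision parameters $\alpha_j$ remain positive throughout the iterations. Hence $f(c/d) \le -B < 0$, and strict monotonicity of $f$ yields $\beta_i < c/d$, completing the bound $0 < \beta_i < c/d$.

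I expect the only delicate point to be bookkeeping in degenerate boundary cases rather than anything conceptual: when $\alpha_{i+1} = 0$ (or $A = 0$), the corresponding fractional term either vanishes or is trivially dominated by $\frac{d}{c}$, so the chain of inequalities is unaffected; and the strictness of the final bound rests entirely on confirming that $B$ is bounded away from zero, which follows from positive definiteness of $\hat{\boldsymbol{\Sigma}}$. No machinery beyond the monotonicity result of Theorem~\ref{Thm1} is needed.
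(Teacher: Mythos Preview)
Your proof is correct and rests on the same core idea as the paper---verify the sign of the governing equation at the test point $\beta_i=c/d$ and then invoke uniqueness of the positive root---but your execution is noticeably cleaner. The paper substitutes $c/d$ directly into the expanded cubic \eqref{simplify} and, after collecting terms, exhibits a manifestly positive expression; since the cubic has positive leading coefficient and a single positive root, positivity at $c/d$ forces the root to lie strictly below $c/d$. You instead stay with the rational function $f$ from Theorem~\ref{Thm1}, exploit its strict monotonicity, and bound each fractional term by $d/c$ via the trivial inequality $\frac{\alpha}{A+(c/d)\alpha}\le d/c$. This sidesteps all polynomial algebra and makes the source of strictness ($B>0$ from positive definiteness of $\hat{\boldsymbol{\Sigma}}$) transparent. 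The two arguments are equivalent---the cubic in \eqref{simplify} is, up to a positive factor, exactly $-f(\beta_i)$ for $\beta_i>0$---so neither is more general, but yours is shorter and more conceptual.
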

\begin{proof}
As \(\beta_i \to +\infty\), the left-hand side (LHS) of \eqref{simplify} tends to infinity. Since \(\beta_i\) is the unique positive real root, we examine the value of \eqref{simplify} at \(\beta_i = c/d\). If the resulting expression is positive, it implies that \(c/d\) serves as an upper bound for \(\beta_i\).  
	It's clear that substitute $c/d$ leads to
	\begin{align}
		\frac{1}{d^3}&\left\{2AEd^3+2\left[\left(BE+\frac{\alpha_i}{2}\right)A+\frac{E\alpha_{i+1}}{2}\right]cd^2 \right.\notag\\
		&\left.+2Bc^2(A\alpha_i+E\alpha_{i+1})d+2c^3\alpha_i \alpha_{i+1}B\right\}>0, \notag
	\end{align}
	which completes the proof.
\end{proof}

Therefore, Theorem \ref{Thm1} and \ref{Thm2} provide theoretical guarantee for  the behavior of $\vec{\boldsymbol{\beta}}$. In practice, \(c\) should satisfy \(c > 1\), while the ratio \(c/d\) determines an upper bound for \(\beta_i\).

\begin{remark}
It is important to note that although varying \(c\) and \(d\) may influence the absolute values of the estimated \(\beta_i\), the algorithm is generally insensitive to their specific choices—similar to the PC-SBL model, where the absolute magnitude of $\beta$ has little impact on performance. {As shown in Section \ref{visualize beta},} \(\beta_i\) tends to fluctuate around a mean within the theoretical bounds given by Theorem \ref{Thm2}. More importantly, it is the relative variations of \(\beta_i\)—rather than their absolute magnitudes—that are essential for capturing the structural differences among blocks and thus driving the performance improvement of the algorithm.
\end{remark}

\vspace{2mm}

\paragraph{Update $\gamma$}

To solve \eqref{Max_gamma}, we follow the same approach as in classical SBL \cite{tipping2001sparse}. The learning rule is given by
\begin{equation}
	\gamma =\frac{M+2g}{\vert| \mathbf{y}-\boldsymbol{\Phi} \boldsymbol{\mu}\vert|_2^2 + \tr (\boldsymbol{\Sigma}\boldsymbol{\Phi}^T \boldsymbol{\Phi})+2h}, \label{update gamma}
\end{equation}
where $\boldsymbol{\mu}$ and $\boldsymbol{\Sigma}$ are computed as in \eqref{posterior}, and both $g$ and $h$ are typically set to small positive values (e.g., $10^{-4}$).

In conclusion, the Space-Power Prior SBL (\textbf{SPP-SBL}) algorithm is summarized in Algorithm \ref{SPP-SBL}. 

\begin{algorithm}[t!] 
	\caption{SPP-SBL Algorithm} 		\label{SPP-SBL} 
		\begin{algorithmic}[1]
			\STATE {\bfseries Input:}{ Measurement matrix $\boldsymbol{\Phi}$, \text{measurement vector}  $\mathbf{y}$, initialized positive parameters:  $\boldsymbol{\alpha}, \vec{\boldsymbol{\beta}}$, noise's precision $\gamma$ and hyperparameters: $c>1, d$. }  	
			\STATE {\bfseries Output:}{ Posterior mean $\hat{\mathbf{x}}^{MAP}.$} 
			\REPEAT
			\STATE Update $\boldsymbol{\alpha}$ by \eqref{update_alpha}. 
			\IF{\rm $\alpha_i >$ threshold (e.g. $1e10$)} 
			\STATE Let $\alpha_i=1e10$.
			\COMMENT{\textcolor{gray}{\small Zero out small energy for efficiency.}}
			\ENDIF
			\STATE Update $\vec{\boldsymbol{\beta}}$ by selecting the unique positive real root from \eqref{posterior_beta}.
			\COMMENT{\textcolor{gray}{\small The numerical algorithm for solving the cubic equation \eqref{simplify} can also yield a positive real root efficiently.}}
			\STATE Update $\boldsymbol{\mu}$ and $\boldsymbol{\Sigma}$ by \eqref{posterior}.
			\STATE Update $\gamma$ using \eqref{update gamma}.
			\UNTIL{convergence criterion met}
			\STATE $\hat{\mathbf{x}}^{MAP}=\boldsymbol{\mu}$.
			\COMMENT{\textcolor{gray}{\small Use posterior mean as estimate.}}
		\end{algorithmic}
\end{algorithm}

\begin{remark}[Computational complexity and practical efficiency]
	The proposed SPP-SBL has a per-iteration computational complexity of
	$\mathcal{O}(M^2N),$ which is identical to that of PC-SBL, A-EBSBL, and the original SBL \cite{ji2008bayesian}. A detailed per-iteration complexity analysis is provided in Appendix~\ref{APP: complexity}. Although SPP-SBL introduces an additional update for the coupling parameters $\vec{\boldsymbol{\beta}}$, this step has only linear complexity in $N$ and is negligible compared to the dominant posterior mean and covariance updates shared by most SBL-type algorithms.
	Moreover, by jointly learning $\boldsymbol{\alpha}$ and $\vec{\boldsymbol{\beta}}$, SPP-SBL exhibits faster empirical convergence, achieving comparable reconstruction accuracy with significantly fewer iterations, and consequently improved practical runtime efficiency, as shown in Fig. \ref{time}.
\end{remark}

\section{Discussion} \label{sec: discussion}

Up to now, SPP-SBL has effectively addressed the open problem of estimating the coupling parameter $\vec{\boldsymbol{\beta}}$ in pattern-coupled Bayesian algorithms. As mentioned in Section \ref{intro}, the issues of boundary overestimation \cite{dai2018non, wang2018alternative, wang2020structured} and limitations in adaptability to more complex data structures, such as multi-pattern datasets \cite{sant2022block} and chain-structured datasets \cite{korki2016iterative}, arising from the coupling parameter estimation problem, can be resolved through accurate estimation of the coupling parameter. Therefore, in this section, we will discuss these two issues in detail and analyze the reasons why the SPP-SBL model effectively addresses them.

\subsection{Boundary Overestimation}

The boundary overestimation issue was first discussed in A-EBSBL \cite{wang2018alternative} and arises in classical pattern-coupled sparse Bayesian methods, such as PC-SBL and A-EBSBL. Intuitively, this problem occurs when a zero element lies at the boundary of a non-zero block.
Taking  PC-SBL as example (same applies to A-EBSBL, etc.), when element \( x_i \) lies within a non-zero cluster, i.e., both \( x_{i-1} \) and \( x_{i+1} \) are non-zero, the rule in \cite{fang2014pattern} (equations (21) and (26)) generates a non-zero \( \alpha_i \), indicating a non-zero \( x_i \). However, when \( x_{i} \) lies at the boundary of a block, it leads to the problem of boundary overestimation. Fig. \ref{bound} illustrates all possible boundary cases for $x_i$, where existing algorithms may encounter overestimation issues. Take Fig. \ref{bound}(a) as example: consider the case where \( x_{i-1} \) is non-zero, and both \( x_i \) and \( x_{i+1} \) are zero. Therefore, \( \alpha_i \) calculated according to \cite{fang2014pattern} (equations (21) and (26)) would also be non-zero, which consequently results in \( x_i \) being non-zero. This issue is also summarized in \cite{wang2018alternative} as a limitation caused by the structure prior. 

	\begin{figure}[!h]
	\begin{center}
		\includegraphics[width=2.7in]{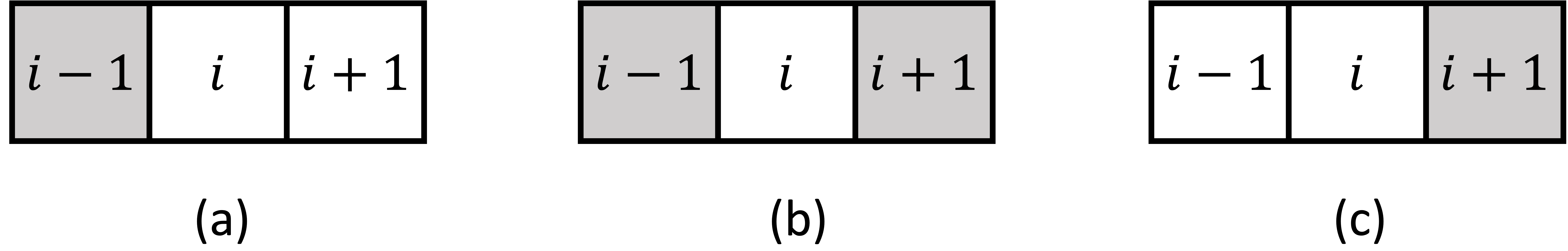}
		\vspace{-1mm}
	\end{center}
	\caption{
		The three possible boundary cases for $x_i$, where the shaded regions indicate nonzero positions and the white regions represent zero positions.}
	\label{bound}
\end{figure}

However, we argue that the root cause of this problem lies fundamentally in the estimation of coupling parameter. In previous works, the same coupling parameter \( \beta \) was used between adjacent elements, which made it impossible to adaptively determine the structure at the boundaries through the learning of \( \beta \). 
To address this, our SPP-SBL model learns \( \vec{\boldsymbol{\beta}} \in \mathbb{R}^{N-1} \) in a manner that assigns weights to neighboring dependencies (as shown in \eqref{update_alpha} and \eqref{update_eta}), enabling the adaptive identification of boundaries. We will further demonstrate the improvement of our algorithm on boundary learning in Section \ref{experiment}.

\subsection{Complex Block Sparse Structure} \label{subsec:complex}

As mentioned in  \cite{sant2022block}, previous works have stated that algorithms using rigid SBL hyperparameter coupling, while effective for flexible block-sparse recovery, struggle to recover signals containing both block-sparse and isolated sparse components, which we refer to as \textit{Multi-Pattern Data}. They argued these stronger priors are biased towards block structures in the underlying signal. We have also observed that this issue extends beyond multi-pattern data, arising similarly in signals with temporal dependencies, such as \textit{Chain Data}. Thus, handling complex block-sparse signals remains a significant challenge for traditional SBL hyperparameter coupling methods.

We also argue that the main cause of this challenge lies in the use of a fixed coupling parameter \( \beta \), which not only remains the same across all elements but also lacks the ability to adaptively learn from the data. This limitation prevents the model from flexibly adjusting the dependency between neighboring elements, making it difficult to obtain reliable results when the data contains both clustered and isolated components. For temporally dependent signals, such as chain data, it is intuitive that each element leverages information differently from its previous and next neighbors—something that traditional models fail to account for.

In contrast, the SPP-SBL model learns \( \vec{\boldsymbol{\beta}} \), allowing it to adaptively shrink toward the boundaries of the data and adjust the dependencies between neighboring elements. In Section \ref{experiment}, we will demonstrate that the algorithm can effectively capture block sparse patterns, even in the case of chain data or when there are significant scale differences between isolated values and blocks, by learning the underlying pattern of \( \vec{\boldsymbol{\beta}} \).

\section{Numerical Experiments}\label{experiment}

In this section, we compare the proposed SPP-SBL algorithm\footnote{Matlab codes for our algorithm are available at \url{https://github.com/YanhaoZhang1/SPP-SBL}.} against twelve existing methods, categorized as follows:  
\begin{itemize}
	\item Sparse learning (without structural information): (1) SBL \cite{tipping2001sparse}.
	\item Predefined block partition: (2) Group Lasso \cite{yuan2006model}, (3) Group BPDN \cite{van2011sparse}, (4) Model-based CS (Mb-CS) \cite{baraniuk2010model}, (5) BSBL \cite{zhang2013extension}, and (6) DivSBL \cite{NEURIPS2024_ead542f1}.
	\item Pattern design: (7) PC-SBL \cite{fang2014pattern}, (8) A-EBSBL \cite{wang2018alternative}, (9) RCS-SBL \cite{wang2020structured} and (10) StructOMP \cite{huang2009learning}.
	\item Tailored for chain-type block data: (11) Block-IBA \cite{korki2016iterative}.
	\item Differential variation regularization: (12)TV-SBL \cite{sant2022block}. 
\end{itemize}

The design matrix \( \boldsymbol{\Phi }\) is randomly generated with i.i.d. Gaussian distribution. The signal-to-noise ratio (SNR) is defined as  
$
\text{SNR (dB)} = 20 \log_{10}\left( \frac{\| \boldsymbol{\Phi } \mathbf{x}\|_2}{\|\mathbf{n}\|_2} \right),
$
and is set to 15 dB unless otherwise specified. 
For the hyperparameters $c$ and $d$ in the prior distribution of the coupling parameter $\vec{\boldsymbol{\beta}}$, we set $c=10$ and $d=1$ for convenience. As shown in Section~\ref{subsec:not sensitive} and Appendix~\ref{App:init}, the experimental results are largely insensitive to the choice of these hyperparameter initializations.

All results are averaged over 500 independent random trials. 
Let $\hat{\mathbf{x}}$ 
denotes the estimate of the true signal $\mathbf{x}_{\text{true}}$. The performance is evaluated using the following three metrics:
\begin{enumerate}
	\item[1.] Estimation Accuracy: Normalized Mean Squared Error (NMSE), defined as  
	\begin{equation*}
		\text{NMSE} = \frac{\|\hat{\mathbf{x}} - \mathbf{x}_{\text{true}}\|_2^2}{\|\mathbf{x}_{\text{true}}\|_2^2}.
	\end{equation*}
	\item [2.] Support Similarity: Correlation (Corr), or cosine similarity, measured as
		\begin{equation*}
		\text{Corr} = \frac{\hat{\mathbf{x}}^T \mathbf{x}_{\text{true}}}{\|\hat{\mathbf{x}}\|_2 \|\mathbf{x}_{\text{true}}\|_2}.
	\end{equation*}
	\item [3.] Support Accuracy: As introduced in \cite{wang2018alternative}\cite{sant2022block},  $\text{supp}(\hat{\mathbf{x}})$ denotes the recovered support of $\hat{\mathbf{x}}$. Then, the support recovery rate (SRR) is defined as
	\begin{equation*}
		\text{SRR} =\frac{\vert \text{supp}(\hat{\mathbf{x}})\cap \text{supp}(\mathbf{x}_{\text{true}})\vert }{\vert \text{supp}(\hat{\mathbf{x}})- \text{supp}(\mathbf{x}_{\text{true}})\vert+\vert \text{supp}(\mathbf{x}_{\text{true}})\vert },
	\end{equation*}
	It is observed that when the estimated support set exactly matches the true support set, $\text{SRR}= 1$.
\end{enumerate}

\subsection{Heteroscedastic Block Sparse Signal}\label{syn data}

\begin{table}[!t]
	\centering
	\caption{Performance comparison on heteroscedastic synthetic data. Our method is highlighted in \colorbox{customblue}{blue}, and the best results are shown in \textbf{bold}. }
	\label{tab:heteroscedastic}
	\begin{tabularx}{\linewidth}{lcccc}
		\toprule
		\multicolumn{4}{c}{\textbf{Heteroscedastic Block Sparse Signal}}\\
		\midrule
		\textbf{Algorithm} & \textbf{NMSE (std)} & \textbf{Corr (std)} & \textbf{SRR (std)} \\
		\midrule
		BSBL          & 0.1094 (0.0707) & 0.9457 (0.0376) & 0.7028 (0.1345) \\
		PC-SBL        & 0.0994 (0.0425) & 0.9528 (0.0192) & 0.7200 (0.1212) \\
		Block-IBA     & 0.1581 (0.1062) & 0.9166 (0.0621)& 0.6317 (0.1486) \\
		A-EBSBL       & 0.0952 (0.0306) & 0.9533 (0.0147) & 0.7358 (0.1303) \\
		SBL           & 0.1538 (0.1058) & 0.9261 (0.0508)& 0.6473 (0.1526)  \\
		Group Lasso   & 0.1486 (0.0546) & 0.9246 (0.0293) & 0.6756 (0.1172) \\
		Group BPDN    & 0.1522 (0.0551) & 0.9223 (0.0296)& 0.6736 (0.1178)  \\
		TV-SBL   & 0.1217 (0.0650) & 0.9387 (0.0341) & 0.6731 (0.1447) \\
		StructOMP     & 0.1360 (0.0679) & 0.9393 (0.0292) & 0.6665 (0.1273)\\
		Mb-CS  &  0.1766 (0.1124) & 0.9119 (0.0561) & 0.6193 (0.1271)\\
		RCS-SBL  &  0.2048 (0.2698) & 0.9128 (0.1057) & 0.6363 (0.2588)\\
		DivSBL        & 0.0640 (0.0367) & 0.9684 (0.0185)& 0.7758 (0.1150) \\
		\rowcolor{customblue}
		SPP-SBL       & \textbf{0.0402} (\textbf{0.0190}) & \textbf{0.9801} (\textbf{0.0095}) & \textbf{0.8151} (\textbf{0.0977}) \\
		\bottomrule
	\end{tabularx}
\end{table}

 We first evaluate the algorithms on the heteroscedastic synthetic data introduced in \cite{NEURIPS2024_ead542f1}, where the block sizes, locations, nonzero magnitudes, and signal variances are randomly generated to mimic real-world data patterns.  
Specifically, we generate sparse signals with a dimensionality of \(N = 162\), containing \(K = 40\) nonzero entries randomly distributed across four blocks with randomly assigned sizes and locations, and set the measurement rate to $0.5$.

The reconstruction results are summarized in Table~\ref{tab:heteroscedastic}. 
Notably, SPP-SBL achieves statistically significant improvements across all evaluation metrics: it attains the lowest NMSE (improving by approximately 37\% over the second-best and 57\% over the third-best methods), the highest Pearson correlation coefficient, and the optimal support recovery rate. 
Moreover, the minimized standard deviations further substantiate the robustness of SPP-SBL.

\subsection{The Effectiveness of Learning the Coupling Parameter $\vec{\boldsymbol{\beta}}$}\label{subsec:not sensitive}

In this experiment, we demonstrate the improvement in recovery performance brought by coupling parameter learning. Following the dataset setup described in Section \ref{syn data}, we evaluate the success rate across measurement ratios ranging from 0.3 to 0.9 in increments of 0.05, under an SNR of $50$ dB. The success rate is defined as the proportion of successful recoveries to the total number of trials, where a recovery is considered successful if the NMSE is no greater than $10^{-5}$.

The comparison algorithms include the PC-SBL model with a fixed coupling parameter $\beta$, where $\beta$ is set to 0, 0.5, 1, and 5, respectively. According to Theorem \ref{Thm2}, the absolute value of $\beta_i$ is governed by the ratio $c/d$. Here, we normalize $d=1$ and examine the effect of varying $c = 2$, $5$, and $10$ on the algorithm's performance. Fig.~\ref{not sensitive} illustrates the success rates of signal reconstruction across different measurement ratios.
We can find that the choice of hyperprior parameters has minimal impact on the performance of SPP-SBL algorithm, demonstrating strong robustness. Similarly, the selection of different positive values for $\beta$ has little effect on the performance of PC-SBL algorithm (consistent with the experimental results presented in Figure 2 of the original PC-SBL paper \cite{fang2014pattern}). 

	\begin{figure}[!t]
	\begin{center}
		\includegraphics[width=2.6in]{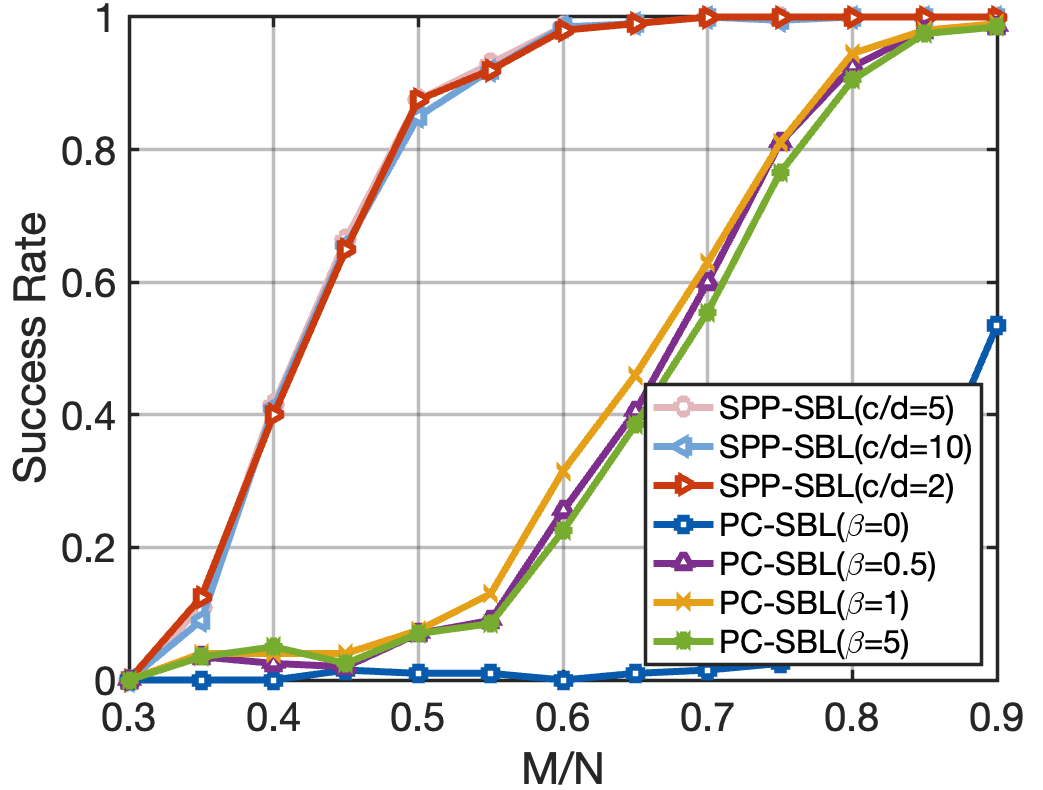}
	\end{center}
	\caption{The impact of different hyperprior parameters $c$ in SPP-SBL and different $\beta$ choices in PC-SBL on success rate.}
	\label{not sensitive}
\end{figure}

These results indicate that for models using a shared coupling parameter $\beta$, the performance gain from learning $\beta$ is limited. In contrast, notable  improvements are enabled by learning the relative magnitudes of a diversified coupling parameter vector $\vec{\boldsymbol{\beta}} \in \mathbb{R}^{N-1}$, which captures dependencies between adjacent entries and enhances recovery accuracy.

\subsection{Multi-pattern Sparse Signal} \label{multi}

As discussed in Section \ref{subsec:complex}, despite its rigid coupling structure, the SPP-SBL model can still adapt to complex data by learning the coupling parameters $\vec{\boldsymbol{\beta}}$. To evaluate this capability, we conduct experiments using datasets derived from the signal models proposed in \cite{sant2021general, sant2022block}, where the so-called multi-pattern sparsity (also referred to as hybrid sparsity) was introduced. Specifically, the full signal dimension is set to \( N = 162 \), the number of measurements to \( M = 80 \), and the total number of nonzero entries to \( K = 30 \). Among these, 25 entries are grouped into 3 randomly generated clusters, while the remaining 5 are isolated nonzero elements. This setup no longer conforms to the standard block sparse assumption and thus poses a more challenging scenario for block sparse recovery algorithms.

The experimental results are presented in Table \ref{tab:multi_pattern}. We can find that SPP-SBL outperforms all other algorithms across all three evaluation metrics. Its smaller standard deviation further indicates strong robustness. These results reinforce the fact that SPP-SBL can effectively adapt to data with diverse sparsity patterns by learning the coupling parameters. Importantly, this flexibility does not come at the expense of its performance in recovering block-sparse signals.

\subsection{Chain Data} \label{chain}

\begin{table}[!t]
	\centering
	\caption{Performance comparison on multi-pattern sparse signal recovery. Our method is highlighted in \colorbox{customblue}{blue}, and the best results are shown in \textbf{bold}.}
	\label{tab:multi_pattern}
	\begin{tabularx}{\linewidth}{lccc}
		\toprule
		\multicolumn{4}{c}{\textbf{Multi-pattern Sparse Signal}}\\
		\midrule
		\textbf{Algorithm} & \textbf{NMSE (std)} & \textbf{Corr (std)} & \textbf{SRR (std)} \\
		\midrule
		BSBL         & 0.1929 (0.0698) & 0.9015 (0.0406) &  0.4439 (0.0709)  \\
		PC-SBL       & 0.1601 (0.0532) & 0.9229 (0.0264) &  0.4006 (0.0617)  \\
		Block-IBA    & 0.1386 (0.0728) & 0.9283 (0.0405) &  0.5068 (0.0901) \\
		A-EBSBL      & 0.1317 (0.0394) & 0.9340 (0.0206) &  0.3884 (0.0491)  \\
		SBL          & 0.1801 (0.0849) & 0.9142 (0.0403) &  0.3969 (0.0683)  \\
		Group Lasso  & 0.2336 (0.0685) & 0.8758 (0.0399) &  0.3026 (0.0411)  \\
		Group BPDN   & 0.2386 (0.0677) & 0.8726 (0.0397) &  0.2926 (\textbf{0.0382})  \\
		TV-SBL    & 0.1327 (0.0606) & 0.9330 (0.0321) &  0.4659 (0.0769)  \\
		StructOMP    & 0.2732 (0.2183) & 0.8712 (0.1022) &  0.4567 (0.0973)  \\
		Mb-CS  &  0.1497 (0.0856) & 0.9257 (0.0451) & 0.5534 (0.0817)\\
		RCS-SBL  &  0.1584 (0.0493) & 0.9232 (0.0248) & 0.3960 (0.0509)\\
		DivSBL       & 0.0994 (0.0528) & 0.9504 (0.0279) &  0.5704 (0.0972)  \\
		\rowcolor{customblue}
		SPP-SBL      & \textbf{0.0670} (\textbf{0.0251}) & \textbf{0.9666} (\textbf{0.0129}) & \textbf{0.6197} (0.0775)  \\
		\bottomrule
	\end{tabularx}
\end{table}

\begin{table}[!t]
	\centering
	\caption{Performance comparison on chain-type block sparse signal recovery. Our method is highlighted in \colorbox{customblue}{blue}, and the best results are shown in \textbf{bold}.}
	\label{tab:blockIBA-data}
	\begin{tabularx}{\linewidth}{lccc}
		\toprule
		\multicolumn{4}{c}{\textbf{Chain-type Block Sparse Signal}}\\
		\midrule
		\textbf{Algorithm} & \textbf{NMSE (std)} & \textbf{Corr (std)} & \textbf{SRR (std)} \\
		\midrule
		BSBL         & 0.1417 (0.0291) & 0.9291 (0.0159) &  0.4905 (0.0817)  \\
		PC-SBL       & 0.0833 (0.0250) & 0.9588 (0.0126) &  0.5525 (0.0460)  \\
		Block-IBA    & 0.1001 (0.0508) & 0.9463 (0.0277) &  0.2752 (0.0485) \\
		A-EBSBL      & 0.3576 (0.0336) & 0.8060 (0.0199) &  0.1500 (\textbf{0.0078})  \\
		SBL          & 0.2861 (0.0783) & 0.8614 (0.0369) &  0.2238 (0.0426)  \\
		Group Lasso  & 0.1674 (0.0407) & 0.9200 (0.0208) &  0.3049 (0.0338)  \\
		Group BPDN   & 0.1732 (0.0411) & 0.9159 (0.0212) &  0.2852 (0.0297)  \\
		TV-SBL    & 0.1685 (0.0367) & 0.9124 (0.0202) &  0.2876 (0.0460)  \\
		StructOMP    & 0.1836 (0.0719) & 0.9076 (0.0374) &  0.5034 (0.0938)  \\
		Mb-CS  &  0.2332 (0.0520) & 0.8776 (0.0296) & 0.2354 (0.0288)\\
		RCS-SBL  &  0.0696 (0.0280) & 0.9652 (0.0145) & 0.5651 (0.0522)\\
		DivSBL       & 0.1307 (0.0496) & 0.9333 (0.0260) &  0.5704 (0.0739)  \\
		\rowcolor{customblue}
		SPP-SBL      & \textbf{0.0442} (\textbf{0.0163}) & \textbf{0.9779} (\textbf{0.0084}) & \textbf{0.7093} (0.0599)  \\
		\bottomrule
	\end{tabularx}
\end{table}

Following the setup in \cite{korki2016iterative}, we simulate block-sparse signals where each entry of $\mathbf{x}$ is generated as $x_i = s_i \theta_i$, with $s_i \in \{0,1\}$ indicating the support and $\theta_i$ denoting the magnitude. In vector form, this can be written as $\mathbf{x} = \mathbf{S}\boldsymbol{\theta}$, where $\mathbf{S} = \operatorname{diag}(\mathbf{s})$. The support vector $\mathbf{s}$ is modeled as a stationary first-order Markov process characterized by transition probabilities $p_{10} = \Pr(s_{i+1}=1 \mid s_i=0)$ and $p_{01} = \Pr(s_{i+1}=0 \mid s_i=1)$. The steady-state probabilities are given by $\Pr(s_i=0) = p = \frac{p_{01}}{p_{10}+p_{01}}$ and $\Pr(s_i=1) = 1-p$, with the parameters $p$ and $p_{10}$ fully specifying the process and $p_{01}$ computed as $p_{01} = \frac{p \cdot p_{10}}{1-p}$.
Under this generative model, the average block length of nonzero entries is $1/p_{01}$. 

In this experiment, the full signal dimension is set to $N=512$, with a relatively challenging observation setting of $M=130$ (approximately one-fourth of the full dimension). The Markov process parameters are configured as $p=0.8$ and $p_{10}=0.01$, resulting in an expected block size of approximately $25$ for the block-sparse signal. 
The reconstruction results are summarized in Table~\ref{tab:blockIBA-data}.

It is observed that on the chain-structured data, SPP-SBL still achieves significant improvements across all three metrics. Specifically, its NMSE is 47\% lower than the second-best method and 56\% lower than the third-best. Moreover, in terms of support recovery rate (SRR), while the average SRR of other methods remains below 60\%, SPP-SBL surpasses 70\%, demonstrating a substantial advantage. 

These results indicate that although SPP-SBL is not specifically designed for chain-structured data, it can still effectively handle such signals by learning the coupling parameters, exhibiting strong robustness across different structured scenarios.

To provide readers with a clear perspective on the performance–complexity tradeoff across different methods, we summarize the runtimes corresponding to the experiments in Sections~\ref{syn data} and \ref{multi}–\ref{chain} in Table~\ref{tab:runtime_all}. Among the pattern-based methods, such as PC-SBL, Block-IBA, A-EBSBL, and RCS-SBL, our proposed SPP-SBL algorithm demonstrates competitive and generally superior runtime performance. While it is naturally slower than heuristic methods like StructOMP and Mb-CS, SPP-SBL achieves significantly higher recovery accuracy with up to 76\% and 81\% improvements, demonstrating a favorable performance–complexity tradeoff.

\begin{figure*}[!t]
	\centering
	\begin{subfigure}[b]{0.325\textwidth}
		\includegraphics[width=\linewidth]{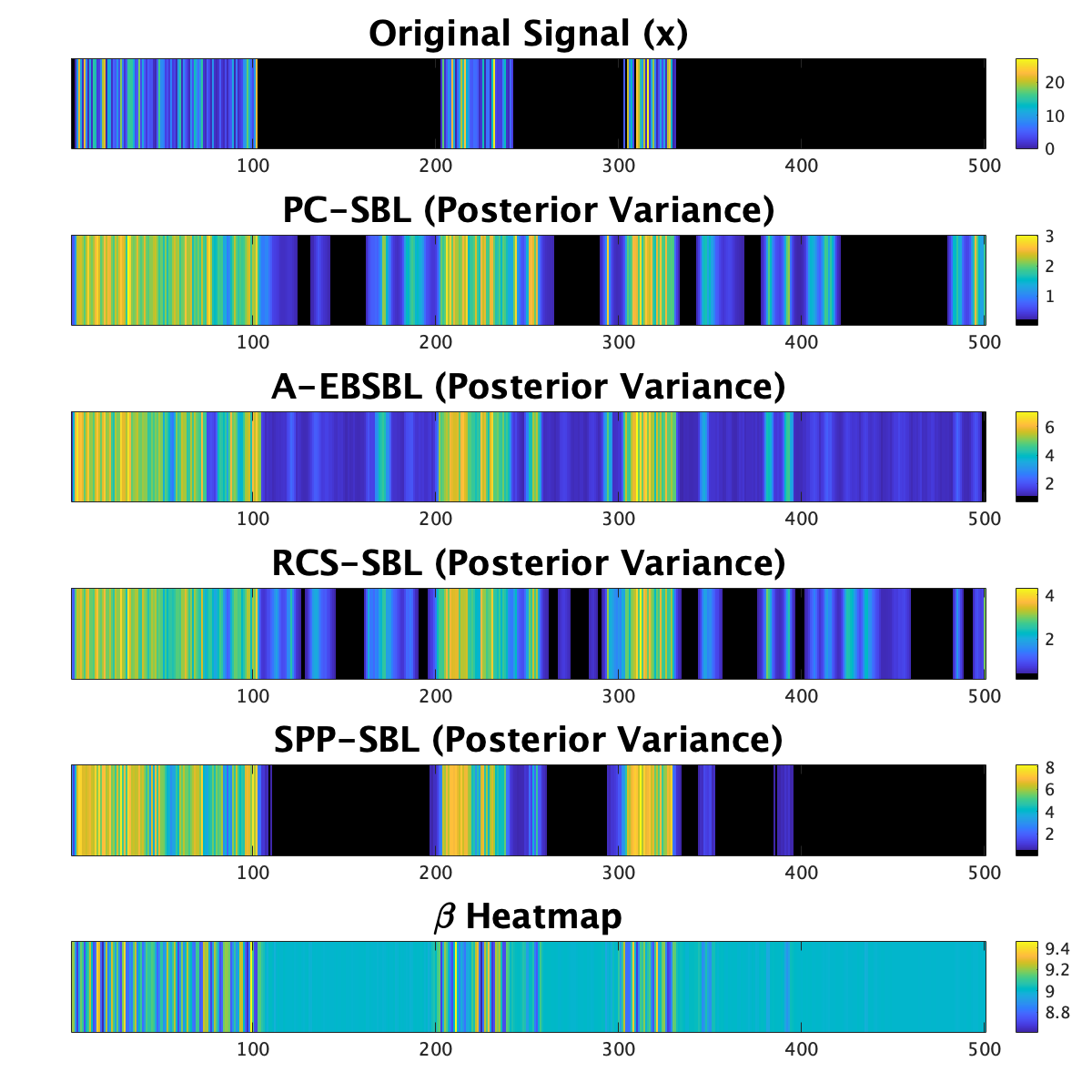}
		\caption{Heteroscedastic signal}
		\label{fig:subfig1}
	\end{subfigure}
	\hfill
	\begin{subfigure}[b]{0.325\textwidth}
		\includegraphics[width=\linewidth]{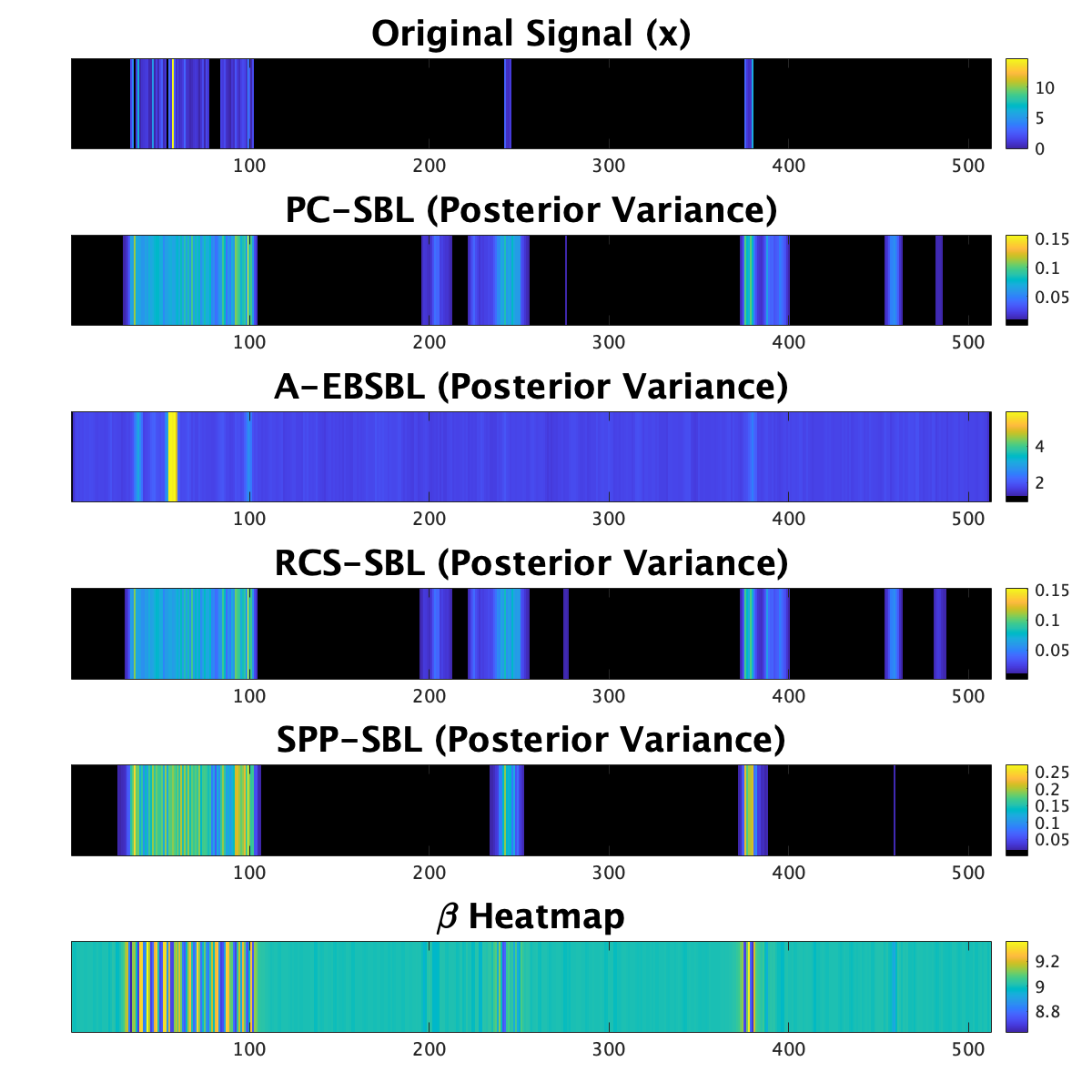}
		\caption{Chain signal}
		\label{fig:subfig2}
	\end{subfigure}
	\hfill
	\begin{subfigure}[b]{0.325\textwidth}
		\includegraphics[width=\linewidth]{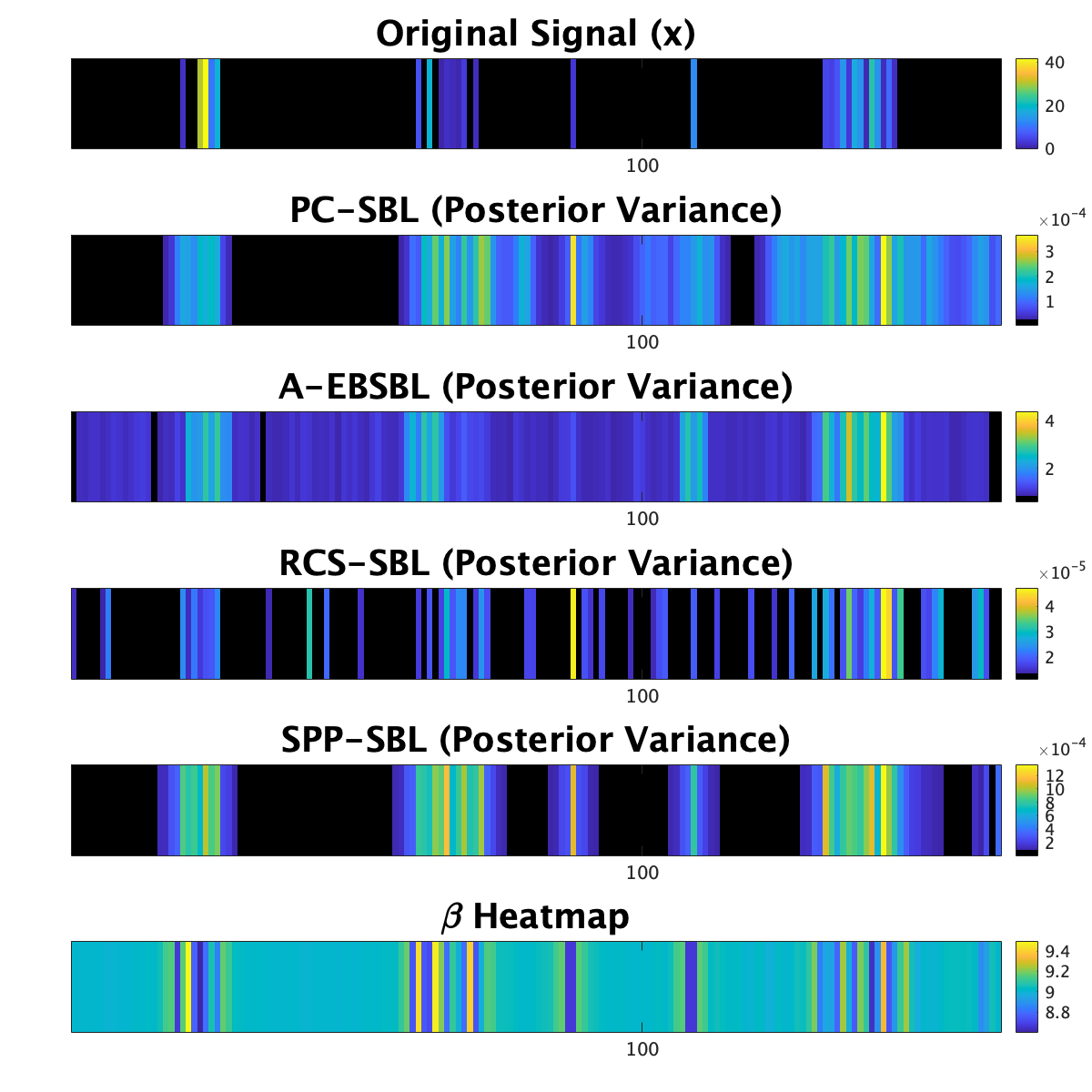}
		\caption{Multi-pattern signal}
		\label{fig:subfig3}
	\end{subfigure}
	\caption{Heatmaps of the posterior variance and the $\boldsymbol{\beta}$ values estimated by the SPP-SBL algorithm across three datasets.}
	\label{fig:beta}
\end{figure*}

\begin{figure*}[t]
	\centering
	\includegraphics[width=1\linewidth]{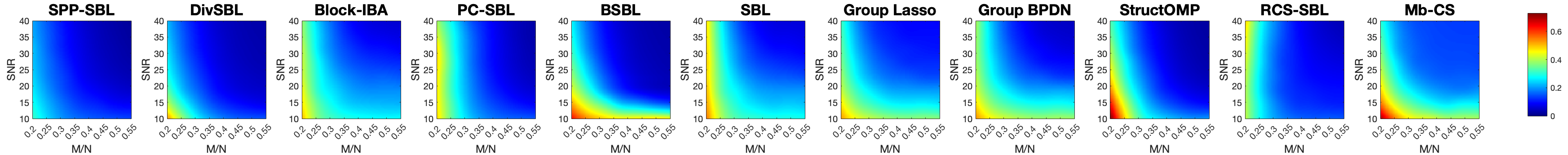}
	\caption{Phase transition  diagram under different  SNR and measurement ratios. \protect\footnotemark}
	\label{phase}
	
\end{figure*}
\footnotetext{TV-SBL and A-EBSBL failed on this audiosets, so only the results of the remaining nine algorithms are shown.}

\begin{table}[!t]
	\centering
	\caption{Runtime comparison across three types of block sparse data. 
			Our method is highlighted in \colorbox{customblue}{blue}.}
	\label{tab:runtime_all}
	\begin{tabularx}{\linewidth}{lccc}
		\toprule
		\multicolumn{4}{c}{~~~~~~~~~~~~~~\textbf{Runtime (s) (std)}} \\ 
		\midrule
		\textbf{Algorithm} &
		\textbf{Heteroscedastic} &
		\textbf{Multi-pattern} &
		\textbf{Chain-type} \\
		\midrule
		BSBL        & 0.0463 (0.0263)& 0.0608 (0.0295) &  0.2221 (0.0421)\\
		PC-SBL      & 0.1687 (0.0148)&  0.1661 (0.0139)& 1.9927 (0.2292)  \\
		Block-IBA   & 0.1845 (0.0873)     & 0.0925 (0.0234) & 0.7689 (0.3844) \\
		A-EBSBL     & 0.1530 (0.0367)&0.1369 (0.0400)  &  0.6186 (0.1601)\\
		SBL         & 0.0166 (0.0068)& 0.0100 (0.0066) & 0.0170 (0.0086) \\
		Group Lasso & 0.4800 (0.0499)& 0.5827 (0.0587) &  1.0341 (0.0562)\\
		Group BPDN  & 0.0101 (0.0030)&  0.0057 (0.0043)& 0.0083 (0.0045) \\
		TV-SBL      & 0.1605 (0.0153)& 0.1341 (0.0038) & 0.7030 (0.0444) \\
		StructOMP   & 0.0104 (0.0022)     & 0.0056 (0.0020) &  0.0093 (0.0026)\\
		Mb-CS &  0.0005 (0.0014) & 0.0010 (0.0026) & 0.0006 (0.0006) \\
		RCS-SBL &  0.2980 (0.0166) &  0.2938 (0.0156)& 3.4150 (0.0882)\\
		DivSBL      & 0.1976 (0.0301) & 0.2365 (0.0260) & 0.6164 (0.0607) \\
		\rowcolor{customblue}
		SPP-SBL & 0.1147 (0.0104)&  0.1164 (0.0139)&  0.9420 (0.0332)\\
		\bottomrule
	\end{tabularx}
\end{table}

\subsection{Visualization of Boundary Estimation Results} \label{visualize beta}

The shrinkage of the Bayesian posterior variance indicates a contraction of the estimation boundary of the signal. In this section, we visually compare the behavior of three pattern-based methods (PC-SBL, A-EBSBL, RCS-SBL and proposed SPP-SBL) by presenting heatmaps of their learned posterior variances on the three types of data introduced above. To further illustrate the adaptive coupling capability of SPP-SBL, we visualize the learned $\vec{\boldsymbol{\beta}}$ values as well.

The first row of Fig. \ref{fig:beta} (a)–(c) presents heatmaps of the original signals for the heteroscedastic signal, chain signal, and multi-pattern sparse signal, respectively. All three exhibit clustered nonzero patterns, each with its own distinctive structure. The second to fifth rows display the posterior variance heatmaps learned by four algorithms on these three types of data. It can be clearly observed that PC-SBL, A-EBSBL and RCS-SBL tend to overestimate the posterior variance near the signal boundaries, failing to effectively truncate the support. In contrast, SPP-SBL yields significantly improved boundary estimates that more accurately reflect the true signal patterns. These observations are consistent with our discussion in Section~\ref{sec: discussion} and the support recovery rate (SRR) results reported in Tables \ref{tab:heteroscedastic}–\ref{tab:blockIBA-data}.

\begin{table*}[!t]
	\caption{Reconstructed error (RNMSE (std) \& Correlation (std)) of the test images.}
	\label{8images}
	\vskip 0.05in
	\setlength{\tabcolsep}{0.24cm}
	\centering
		\begin{tabular}{lcccccccc}
			\toprule
			& \textbf{Parrot} & \textbf{Cameraman} & \textbf{Lena} & \textbf{Boat} & \textbf{House} & \textbf{Barbara} & \textbf{Monarch} & \textbf{Foreman} \\ \cmidrule(lr){2-9}
			\textbf{Algorithm} & \multicolumn{8}{c}{\textbf{Square root of NMSE (std)}} \\
			\midrule
			{BSBL}           & 0.139 (0.004) & 0.156 (0.006) & 0.137 (0.004) & 0.179 (0.007) & 0.146 (0.007) & 0.142 (0.004) & 0.272 (0.009) & 0.125 (0.007) \\
			{PC-SBL}         & 0.133 (0.013) & 0.150 (0.012) & 0.134 (0.013) & 0.159 (0.014) & 0.137 (0.013) & 0.137 (0.013) & 0.208 (0.010) & 0.126 (0.014) \\
			{A-EBSBL}         & 0.156 (0.047) & 0.172 (0.041) & 0.143 (0.034) & 0.176 (0.025) & 0.158 (0.033) & 0.142 (0.026) & 0.252 (0.016) & 0.140 (0.040) \\
			{SBL}            & 0.225 (0.121) & 0.247 (0.141) & 0.223 (0.129) & 0.260 (0.114) & 0.238 (0.125) & 0.228 (0.119) & 0.458 (0.106) & 0.175 (0.099) \\
			{GLasso}    & 0.139 (0.017) & 0.153 (0.016) & 0.134 (0.017) & 0.159 (0.018) & 0.141 (0.018) & 0.135 (0.016) & 0.216 (0.020) & 0.124 (0.017) \\
			{GBPDN}     & 0.138 (0.017) & 0.153 (0.017) & 0.134 (0.017) & 0.159 (0.019) & 0.133 (0.019) & 0.135 (0.017) & 0.218 (0.022) & 0.123 (0.017) \\
			{TV-SBL}     & 0.214 (0.057) & 0.219 (0.044) & 0.206 (0.060) & 0.244 (0.058) & 0.212 (0.064) & 0.205 (0.051) & 0.382 (0.073) & 0.184  (0.055) \\
			{StrOMP}      & 0.161 (0.014) & 0.184 (0.013) & 0.159 (0.013) & 0.187 (0.014) & 0.162 (0.014) & 0.164 (0.013) & 0.248 (0.015) & 0.149 (0.016) \\
			Mb-CS & 0.160 (0.008) & 0.179 (0.009) & 0.148 (0.007) & 0.184 (0.011) & 0.156 (0.009) & 0.152 (0.008) & 0.274 (0.010) & 0.130 (0.011)\\
			{DivSBL}         & 0.117 (0.007) & 0.142 (0.006) & 0.114 (0.005) & 0.150 (0.008) & 0.120 (0.006) & 0.120 (0.005) & 0.203 (0.008) & 0.101 (0.007) \\
			
			\rowcolor{customblue}{SPP-SBL}         & \textbf{0.105} (0.008) & \textbf{0.124} (0.008) & \textbf{0.104} (0.007) & \textbf{0.133} (0.010) & \textbf{0.108} (0.008) & \textbf{0.109} (0.008) & \textbf{0.187} (0.008) & \textbf{0.094} (0.009) \\
			\cmidrule(lr){2-9}
			& \multicolumn{8}{c}{\textbf{Correlation (std)}} \\
			\midrule
			{BSBL}           & 0.940 (0.004) & 0.940 (0.005) & 0.923 (0.004) & 0.863 (0.012) & 0.887 (0.012) & 0.921 (0.004) & 0.768 (0.016) & 0.927 (0.009) \\
			{PC-SBL}         & 0.948 (0.010) & 0.948 (0.008) & 0.931 (0.012) & 0.902 (0.017) & 0.909 (0.018) & 0.931 (0.012) & 0.881 (0.011) & 0.930 (0.016) \\
			{A-EBSBL}         & 0.923 (0.055) & 0.929 (0.038) & 0.916 (0.044) & 0.871 (0.034) & 0.872 (0.043) & 0.921 (0.033) & 0.806 (0.025) & 0.910 (0.048) \\
			{SBL}            & 0.865 (0.111) & 0.870 (0.122) & 0.835 (0.138) & 0.792 (0.120) & 0.787 (0.143) & 0.837 (0.125) & 0.628 (0.092) & 0.874 (0.111) \\
			{GLasso}    & 0.946 (0.011) & 0.949 (0.008) & 0.935 (0.013) & 0.906 (0.017) & 0.909 (0.019) & 0.937 (0.012) & 0.874 (0.018) & 0.937 (0.015) \\
			{GBPDN}     & 0.947 (0.011) & 0.949 (0.008) & 0.936 (0.013) & 0.906 (0.017) & 0.910 (0.019) & 0.938 (0.012) & 0.871 (0.019) & 0.938 (0.015) \\
			{TV-SBL}     & 0.878 (0.055) & 0.898 (0.036) & 0.861 (0.062) & 0.814 (0.057) & 0.822 (0.070) & 0.870 (0.047) & 0.692 (0.068) & 0.870 (0.059) \\
			{StrOMP}      & 0.927 (0.012) & 0.924 (0.010) & 0.908 (0.014) & 0.873 (0.017) & 0.881 (0.019) & 0.906 (0.013) & 0.845 (0.015) & 0.907 (0.019) \\
			Mb-CS      & 0.923 (0.008) & 0.925 (0.008) & 0.913 (0.008) & 0.863 (0.015) & 0.878 (0.013) & 0.913 (0.008) & 0.780 (0.014) & 0.923 (0.013) \\
			{DivSBL}         & 0.959 (0.005) & 0.951 (0.004) & 0.948 (0.005) & 0.909 (0.010) & 0.927 (0.008) & 0.945 (0.005) & 0.882 (0.009) & 0.953 (0.007) \\
			
			\rowcolor{customblue}{SPP-SBL}         & \textbf{0.967} (0.005) & \textbf{0.964} (0.005) & \textbf{0.957} (0.006) & \textbf{0.929} (0.010) & \textbf{0.941} (0.009) & \textbf{0.955} (0.006) & \textbf{0.900} (0.008) & \textbf{0.959} (0.008) \\
			\bottomrule
		\end{tabular}
\end{table*}

\begin{figure*}[t]
	
	\hspace{0.02mm}
	\begin{subfigure}[b]{0.077\textwidth}
		\includegraphics[width=\textwidth]{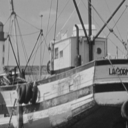}
	\end{subfigure}
	\begin{subfigure}[b]{0.077\textwidth}
		\includegraphics[width=\textwidth]{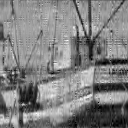}
	\end{subfigure}
	\begin{subfigure}[b]{0.077\textwidth}
		\includegraphics[width=\textwidth]{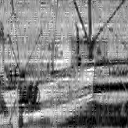}
	\end{subfigure}
	\begin{subfigure}[b]{0.077\textwidth}
		\includegraphics[width=\textwidth]{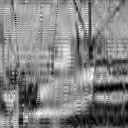}
	\end{subfigure}
	\begin{subfigure}[b]{0.077\textwidth}
		\includegraphics[width=\textwidth]{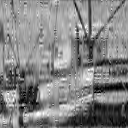}
	\end{subfigure}
	\begin{subfigure}[b]{0.077\textwidth}
		\includegraphics[width=\textwidth]{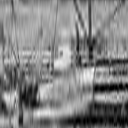}
	\end{subfigure}
	\begin{subfigure}[b]{0.077\textwidth}
		\includegraphics[width=\textwidth]{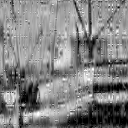}
	\end{subfigure}
	\begin{subfigure}[b]{0.077\textwidth}
		\includegraphics[width=\textwidth]{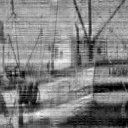}
	\end{subfigure}
	\begin{subfigure}[b]{0.077\textwidth}
		\includegraphics[width=\textwidth]{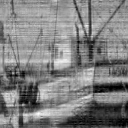}
	\end{subfigure}
	\begin{subfigure}[b]{0.077\textwidth}
		\includegraphics[width=\textwidth]{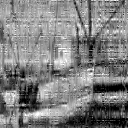}
	\end{subfigure}
	\begin{subfigure}[b]{0.077\textwidth}
		\includegraphics[width=\textwidth]{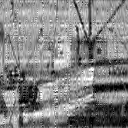}
	\end{subfigure}
	\begin{subfigure}[b]{0.077\textwidth}
		\includegraphics[width=\textwidth]{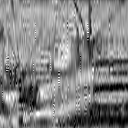}
	\end{subfigure}
	
	\vspace{1mm}
	
	\hspace{0.02mm}
	\begin{subfigure}[b]{0.077\textwidth}
		\includegraphics[width=\textwidth]{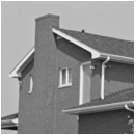}
	\end{subfigure}
	\begin{subfigure}[b]{0.077\textwidth}
		\includegraphics[width=\textwidth]{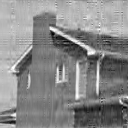}
	\end{subfigure}
	\begin{subfigure}[b]{0.077\textwidth}
		\includegraphics[width=\textwidth]{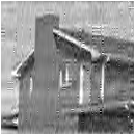}
	\end{subfigure}
	\begin{subfigure}[b]{0.077\textwidth}
		\includegraphics[width=\textwidth]{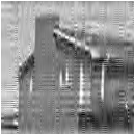}
	\end{subfigure}
	\begin{subfigure}[b]{0.077\textwidth}
		\includegraphics[width=\textwidth]{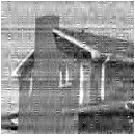}
	\end{subfigure}
	\begin{subfigure}[b]{0.077\textwidth}
		\includegraphics[width=\textwidth]{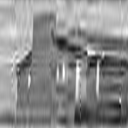}
	\end{subfigure}
	\begin{subfigure}[b]{0.077\textwidth}
		\includegraphics[width=\textwidth]{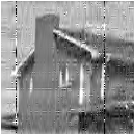}
	\end{subfigure}
	\begin{subfigure}[b]{0.077\textwidth}
		\includegraphics[width=\textwidth]{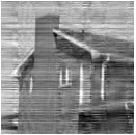}
	\end{subfigure}
	\begin{subfigure}[b]{0.077\textwidth}
		\includegraphics[width=\textwidth]{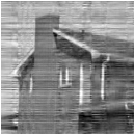}
	\end{subfigure}
	\begin{subfigure}[b]{0.077\textwidth}
		\includegraphics[width=\textwidth]{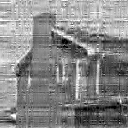}
	\end{subfigure}
	\begin{subfigure}[b]{0.077\textwidth}
		\includegraphics[width=\textwidth]{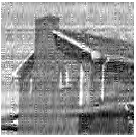}
	\end{subfigure}
	\begin{subfigure}[b]{0.077\textwidth}
		\includegraphics[width=\textwidth]{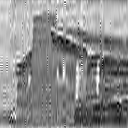}
	\end{subfigure}
	
	\vspace{1mm}
	
	\hspace{0.02mm}
	\begin{subfigure}[b]{0.077\textwidth}
		\includegraphics[width=\textwidth]{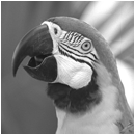}
		\caption{\scriptsize Original}
	\end{subfigure}
	\begin{subfigure}[b]{0.077\textwidth}
		\includegraphics[width=\textwidth]{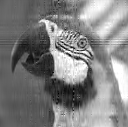}
		\caption{SPP-SBL}
	\end{subfigure}
	\begin{subfigure}[b]{0.077\textwidth}
		\includegraphics[width=\textwidth]{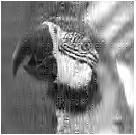}
		\caption{DivSBL}
	\end{subfigure}
	\begin{subfigure}[b]{0.077\textwidth}
		\includegraphics[width=\textwidth]{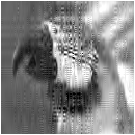}
		\caption{BSBL}
	\end{subfigure}
	\begin{subfigure}[b]{0.077\textwidth}
		\includegraphics[width=\textwidth]{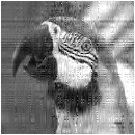}
		\caption{PC-SBL}
	\end{subfigure}
	\begin{subfigure}[b]{0.077\textwidth}
		\includegraphics[width=\textwidth]{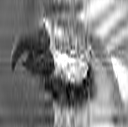}
		\caption{A-EBSBL}
	\end{subfigure}
	\begin{subfigure}[b]{0.077\textwidth}
		\includegraphics[width=\textwidth]{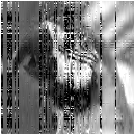}
		\caption{SBL}
	\end{subfigure}
	\begin{subfigure}[b]{0.077\textwidth}
		\includegraphics[width=\textwidth]{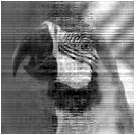}
		\caption{GLasso}
	\end{subfigure}
	\begin{subfigure}[b]{0.077\textwidth}
		\includegraphics[width=\textwidth]{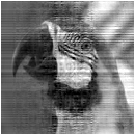}
		\caption{GBPDN}
	\end{subfigure}
	\begin{subfigure}[b]{0.077\textwidth}
		\includegraphics[width=\textwidth]{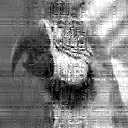}
		\caption{TV-SBL}
	\end{subfigure}
	\begin{subfigure}[b]{0.077\textwidth}
		\includegraphics[width=\textwidth]{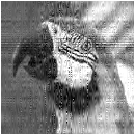}
		\caption{StrOMP}
	\end{subfigure}
	\begin{subfigure}[b]{0.077\textwidth}
		\includegraphics[width=\textwidth]{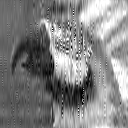}
		\caption{Mb-CS}
	\end{subfigure}
	
	\caption{Reconstruction results for Boat, House and Parrot images.}
	\label{fig_results}
\end{figure*}

Furthermore, by examining the heatmaps of the posterior of $\vec{\boldsymbol{\beta}}$ (as shown in the sixth row of Fig. \ref{fig:beta} (a)–(c)), we observe that most entries converge to a common mean (e.g., around $9$ in Fig. \ref{fig:beta}), while deviations from this mean occur only in regions corresponding to true signal variation. As a result, the heatmaps of $\vec{\boldsymbol{\beta}}$ closely mirror the structural patterns of the ground-truth signals. This further supports our claim that the key to improved recovery performance lies in the relative values of $\vec{\boldsymbol{\beta}}$ learned by the model, which adaptively capture the underlying signal structure.

\subsection{Audio Dataset}\label{audioset}

Audio signals exhibit block sparse structures under the discrete cosine transform (DCT) basis, as illustrated in Appendix~\ref{App:block}, making them particularly suitable for evaluating block-sparse recovery algorithms. In this subsection, we conduct experiments on real-world audio samples randomly selected from the AudioSet dataset \cite{gemmeke2017audio}, focusing on the phase transition behavior of SPP-SBL in comparison with other block-sparse recovery methods.

The experimental setup follows the configuration described in \cite{NEURIPS2024_ead542f1}. Specifically, each audio signal contains approximately 90 non-zero coefficients in the DCT domain (i.e., $K = 90$), accounting for about $20\%$ of the total dimensionality ($N = 480$). Accordingly, we begin with a sampling rate of $20\%$, where the ratio $M/K$ is close to 1 and increases with higher sampling rates $M/N$ (from 0.2 to 0.55). Meanwhile, the signal-to-noise ratio (SNR) is gradually varied from 10~dB to 40~dB.

The square root of NMSE for each algorithm is shown in Fig.~\ref{phase}, where darker colors in the heatmap indicate lower reconstruction errors. SPP-SBL demonstrates a clear advantage over the other methods, particularly under extreme measurement ratios and low SNR conditions. Its phase transition curve spans a noticeably larger region of successful recovery, reflecting stronger robustness and reconstruction capability. 

We further evaluated SPP-SBL on high-dimensional data using this dataset. The results demonstrate that SPP-SBL scales well to high dimension while maintaining robust and stable reconstruction performance. Detailed results are provided in Appendix~\ref{App:high_dimen}.

\subsection{Image Reconstruction}\label{image}

Images tend to exhibit block-sparse structures in the discrete wavelet transform (DWT) domain, as shown in Appendix~\ref{App:block}.
In this experiment, we evaluate block-sparse recovery algorithms using a standard set of grayscale images\footnote{The above eight grayscale images are widely used benchmark set for image reconstruction, available at http://dsp.rice.edu/software/DAMP-toolbox and http://see.xidian.edu.cn/faculty/wsdong/NLR\_Exps.htm.}. A sampling rate of 0.5 is applied, and the reconstruction errors and correlations are summarized in Table \ref{8images}. Among all compared methods, SPP-SBL achieves the most accurate reconstructions in a statistically significant sense. As shown in Fig. \ref{fig_results}, which presents the reconstructed Boat, House and Parrot images as representative examples, SPP-SBL effectively suppresses artifact streaks and produces noticeably smoother and more visually faithful results.

\section{Conclusion} \label{conclusion}

This paper introduced a unified variance transformation framework that incorporated a space-power prior based on undirected graphs to adaptively capture unknown block structures while directly estimating their coupling parameters. We proved that each coupling parameter $\beta_i$ admitted a unique positive real solution within this framework. Building on this theoretical foundation, we developed SPP-SBL, an EM-based algorithm enhanced with high-order equation root-finding techniques, which ultimately resolved the longstanding challenge of coupling parameter estimation in pattern-based block sparse recovery methods. Notably, our analysis revealed that the relative magnitudes of the learned coupling parameters $\vec{\boldsymbol{\beta}}$, rather than tuning a single shared value, played a key role in enhancing recovery performance.
Extensive simulations involving complex structural patterns, such as chain-structured and multi-pattern sparse signals, along with real-world experiments on multi-modal data (images and audio), demonstrated that SPP-SBL significantly outperformed existing methods and effectively captured the underlying structures of the signals.

Future work may explore richer coupling matrix structures $\boldsymbol{T}$ within the variance‑transformation framework to accommodate more diverse block sparse patterns, and extend the space‑power prior to two‑ or even higher‑dimensional settings for applications in high‑dimensional imaging, video, and spatiotemporal signal processing.





\begin{thebibliography}{1}
\bibliographystyle{IEEEtran}
\bibitem{donoho2006compressed}
D.~L. Donoho, ``Compressed sensing,'' \emph{IEEE Transactions on Information
	Theory}, vol.~52, no.~4, pp. 1289--1306, 2006.

\bibitem{gorodnitsky1997sparse}
I.~F. Gorodnitsky and B.~D. Rao, ``Sparse signal reconstruction from limited
data using {FOCUSS}: A re-weighted minimum norm algorithm,'' \emph{IEEE
	Transactions on Signal Processing}, vol.~45, no.~3, pp. 600--616, 1997.

\bibitem{candes2005decoding}
E.~J. Candes and T.~Tao, ``Decoding by linear programming,'' \emph{IEEE
	Transactions on Information Theory}, vol.~51, no.~12, pp. 4203--4215, 2005.

\bibitem{tibshirani1996regression}
R.~Tibshirani, ``Regression shrinkage and selection via the lasso,''
\emph{Journal of the Royal Statistical Society Series B: Statistical
	Methodology}, vol.~58, no.~1, pp. 267--288, 1996.

\bibitem{chen2001atomic}
S.~S. Chen, D.~L. Donoho, and M.~A. Saunders, ``Atomic decomposition by basis
pursuit,'' \emph{SIAM Review}, vol.~43, no.~1, pp. 129--159, 2001.

\bibitem{pati1993orthogonal}
Y.~C. Pati, R.~Rezaiifar, and P.~S. Krisnaprasad, ``Orthogonal matching
pursuit: Recursive function approximation with applications to wavelet
decomposition,'' in \emph{Proceedings of 27th Asilomar Conference on Signals,
	Systems and Computers}.\hskip 1em plus 0.5em minus 0.4em\relax IEEE,
pp. 40--44, 1993.

\bibitem{tipping2001sparse}
M.~E. Tipping, ``Sparse {Bayesian} learning and the relevance vector machine,''
\emph{Journal of Machine Learning Research}, vol.~1, no. Jun, pp. 211--244,
2001.

\bibitem{ji2008bayesian}
S.~Ji, Y.~Xue, and L.~Carin,
``Bayesian compressive sensing,''
\emph{IEEE Transactions on Signal Processing},
vol.~56, no.~6, pp.~2346--2356, 2008.


\bibitem{eldar2010block}
Y.~C. Eldar, P.~Kuppinger, and H.~Bolcskei, ``Block-sparse signals:
{Uncertainty} relations and efficient recovery,'' \emph{IEEE Transactions on
	Signal Processing}, vol.~58, no.~6, pp. 3042--3054, 2010.

\bibitem{baraniuk2010model}
R.~G.~Baraniuk, V.~Cevher, M.~F.~Duarte, and C.~Hegde,
``Model-based compressive sensing,''
\emph{IEEE Transactions on Information Theory},
vol.~56, no.~4, pp.~1982--2001, Apr.~2010.

\bibitem{donoho2013accurate}
D.~L. Donoho, I.~Johnstone, and A.~Montanari, ``Accurate prediction of phase
transitions in compressed sensing via a connection to minimax denoising,''
\emph{IEEE Transactions on Information Theory}, vol.~59, no.~6, pp.
3396--3433, 2013.

\bibitem{mishali2009blind}
M.~Mishali and Y.~C. Eldar, ``Blind multiband signal reconstruction: Compressed
sensing for analog signals,'' \emph{IEEE Transactions on Signal Processing},
vol.~57, no.~3, pp. 993--1009, 2009.

\bibitem{gribonval2003harmonic}
R.~Gribonval and E.~Bacry, ``Harmonic decomposition of audio signals with
matching pursuit,'' \emph{IEEE Transactions on Signal Processing}, vol.~51,
no.~1, pp. 101--111, 2003.

\bibitem{NEURIPS2024_ead542f1}
Y.~Zhang, Z.~Zhu, and Y.~Xia, ``Block sparse {Bayesian} learning: A diversified
scheme,'' in \emph{Advances in Neural Information Processing Systems},
vol.~37, pp. 129\,988--130\,017, 2024.

\bibitem{yu2012bayesian}
L.~Yu, H.~Sun, J.-P. Barbot, and G.~Zheng, ``Bayesian compressive sensing for
cluster structured sparse signals,'' \emph{Signal Processing}, vol.~92,
no.~1, pp. 259--269, 2012.

\bibitem{he2009exploiting}
L.~He and L.~Carin, ``Exploiting structure in wavelet-based {Bayesian}
compressive sensing,'' \emph{IEEE Transactions on Signal Processing},
vol.~57, no.~9, pp. 3488--3497, 2009.

\bibitem{tibshirani2005sparsity}
R.~Tibshirani, M.~Saunders, S.~Rosset, J.~Zhu, and K.~Knight, ``Sparsity and
smoothness via the fused lasso,'' \emph{Journal of the Royal Statistical
	Society Series B: Statistical Methodology}, vol.~67, no.~1, pp. 91--108,
2005.

\bibitem{dai2018non}
J.~Dai, A.~Liu, and H.~C. So, ``Non-uniform burst-sparsity learning for massive
{MIMO} channel estimation,'' \emph{IEEE Transactions on Signal Processing},
vol.~67, no.~4, pp. 1075--1087, 2018.

\bibitem{lv2014inverse}
J.~Lv, L.~Huang, Y.~Shi, and X.~Fu, ``Inverse synthetic aperture radar imaging
via modified smoothed $ l_0$  norm,'' \emph{IEEE Antennas and
	Wireless Propagation Letters}, vol.~13, pp. 1235--1238, 2014.
	
\bibitem{srivastava2018quasi}
	S.~Srivastava, A.~Mishra, A.~Rajoriya, A.~K.~Jagannatham, and G.~Ascheid,
	``Quasi-static and time-selective channel estimation for block-sparse millimeter wave hybrid MIMO systems: Sparse Bayesian learning (SBL)-based approaches,''
	\emph{IEEE Transactions on Signal Processing},
	vol.~67, no.~5, pp.~1251--1266, Mar.~2019.
	
	\bibitem{wang2025near}
	H.~Wang, J.~Fang, H.~Duan, H.~Li, and L.~Li,
	``Near/Far-field channel estimation for terahertz systems with ELAAs:
	A block-sparsity-aware approach,''
	\emph{IEEE Transactions on Communications},
	vol.~74, pp.~2685--2700, 2025.
	

\bibitem{yuan2006model}
M.~Yuan and Y.~Lin, ``Model selection and estimation in regression with grouped
variables,'' \emph{Journal of the Royal Statistical Society Series B:
	Statistical Methodology}, vol.~68, no.~1, pp. 49--67, 2006.

\bibitem{van2011sparse}
E.~Van~den Berg and M.~P. Friedlander, ``Sparse optimization with least-squares
constraints,'' \emph{SIAM Journal on Optimization}, vol.~21, no.~4, pp.
1201--1229, 2011.

\bibitem{park2008bayesian}
T.~Park and G.~Casella,
``The Bayesian lasso,''
\emph{Journal of the American Statistical Association},
vol.~103, no.~482, pp.~681--686, 2008.


\bibitem{carvalho2010horseshoe}
C.~M.~Carvalho, N.~G.~Polson, and J.~G.~Scott,
``The horseshoe estimator for sparse signals,''
\emph{Biometrika}, vol.~97, no.~2, pp.~465--480, 2010.


\bibitem{hernandez2013generalized}
D.~Hern{\'a}ndez-Lobato, J.~M.~Hern{\'a}ndez-Lobato, and P.~Dupont,
``Generalized spike-and-slab priors for Bayesian group feature selection using expectation propagation,''
\emph{The Journal of Machine Learning Research}, vol.~14, no.~1,
pp.~1891--1945, 2013.

\bibitem{raman2009bayesian}
S.~Raman, T.~J.~Fuchs, P.~J.~Wild, E.~Dahl, and V.~Roth,
``The Bayesian group-lasso for analyzing contingency tables,''
in \emph{Proceedings of the 26th Annual International Conference on Machine Learning},
pp.~881--888, 2009.


\bibitem{zhang2011sparse}
Z.~Zhang and B.~D. Rao, ``Sparse signal recovery with temporally correlated
source vectors using sparse {Bayesian} learning,'' \emph{IEEE Journal of
	Selected Topics in Signal Processing}, vol.~5, no.~5, pp. 912--926, 2011.

\bibitem{zhang2013extension}
Z.~\vspace{0mm} Zhang and B.~D. Rao, ``Extension of {SBL} algorithms for the
recovery of block sparse signals with intra-block correlation,'' \emph{IEEE
	Transactions on Signal Processing}, vol.~61, no.~8, pp. 2009--2015, 2013.

\bibitem{fang2014pattern}
J.~Fang, Y.~Shen, H.~Li, and P.~Wang, ``Pattern-coupled sparse Bayesian
learning for recovery of block-sparse signals,'' \emph{IEEE Transactions on
	Signal Processing}, vol.~63, no.~2, pp. 360--372, 2015.

\bibitem{wang2018alternative}
L.~Wang, L.~Zhao, S.~Rahardja, and G.~Bi, ``Alternative to extended block
sparse Bayesian learning and its relation to pattern-coupled sparse Bayesian
learning,'' \emph{IEEE Transactions on Signal Processing}, vol.~66, no.~10,
pp. 2759--2771, 2018.

\bibitem{wang2020structured}
L.~Wang, L.~Zhao, L.~Yu, J.~Wang, and G.~Bi, ``Structured {Bayesian} learning
for recovery of clustered sparse signal,'' \emph{Signal Processing}, vol.
166, 107255, 2020.



\bibitem{sant2022block}
A.~Sant, M.~Leinonen, and B.~D. Rao, ``Block-sparse signal recovery via general
total variation regularized sparse {Bayesian} learning,'' \emph{IEEE
	Transactions on Signal Processing}, vol.~70, pp. 1056--1071, 2022.

\bibitem{korki2016iterative}
M.~Korki, J.~Zhang, C.~Zhang, and H.~Zayyani, ``Iterative Bayesian
reconstruction of non-iid block-sparse signals,'' \emph{IEEE Transactions on
	Signal Processing}, vol.~64, no.~13, pp. 3297--3307, 2016.

\bibitem{dempster1977maximum}
A.~P. Dempster, N.~M. Laird, and D.~B. Rubin, ``Maximum likelihood from
incomplete data via the {EM} algorithm,'' \emph{Journal of the Royal
	Statistical Society Series B: Statistical Methodology}, vol.~39, no.~1, pp.
1--22, 1977.

\bibitem{abramowitz1965handbook}
M.~Abramowitz and I.~A. Stegun, \emph{Handbook of mathematical functions: with
formulas, graphs, and mathematical tables}, vol.~55. New York, NY, USA: Dover Publications, 1965.

\bibitem{huang2009learning}
J.~Huang, T.~Zhang, and D.~Metaxas, ``Learning with structured sparsity,'' in
\emph{Proceedings of the 26th Annual International Conference on Machine
	Learning}, pp. 417--424, 2009.

\bibitem{sant2021general}
A.~Sant, M.~Leinonen, and B.~D. Rao, ``General total variation regularized
sparse {Bayesian} learning for robust block-sparse signal recovery,'' in
\emph{ICASSP 2021-2021 IEEE International Conference on Acoustics, Speech and
	Signal Processing (ICASSP)}.\hskip 1em plus 0.5em minus 0.4em\relax IEEE, pp. 5604--5608,
	2021.

\bibitem{gemmeke2017audio}
J.~F. Gemmeke, D.~P. Ellis, D.~Freedman, A.~Jansen, W.~Lawrence, R.~C. Moore,
M.~Plakal, and M.~Ritter, ``Audio set: {An} ontology and human-labeled
dataset for audio events,'' in \emph{2017 IEEE International Conference on
	Acoustics, Speech and Signal Processing (ICASSP)}.\hskip 1em plus 0.5em minus
0.4em\relax IEEE, pp. 776--780, 2017.



\end{thebibliography}
%

\appendices
\section{Visualization of Structured Sparsity in Transform Domains}\label{App:block}

\begin{figure}[h]
	\begin{center}
		\includegraphics[width=3.3in]{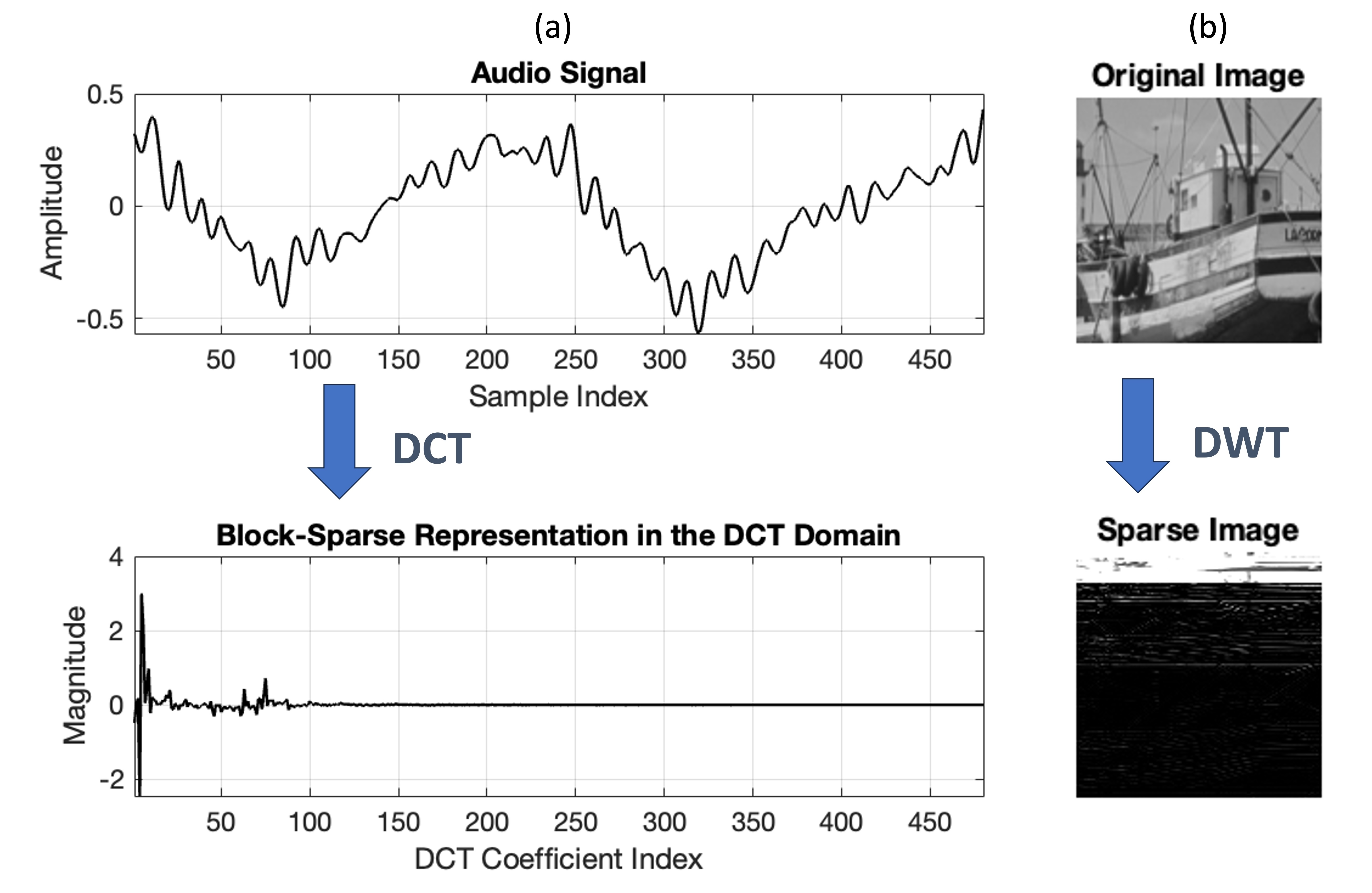}
	\end{center}
	\caption{Block sparse representations in the corresponding transform domains.}
	\label{block sparse}
\end{figure}

Many real-world signals admit block-sparse representations after appropriate transforms. 
Specifically, natural images in the discrete wavelet transform (DWT) domain 
and audio signals in the discrete cosine transform (DCT) domain 
naturally exhibit significant coefficients that form contiguous clusters, 
rather than isolated nonzero entries. 
As shown in Fig.~\ref{block sparse}, the audio and image signals used in 
Section~\ref{audioset} and Section~\ref{image} display clear block-sparse structures in their corresponding transform domains.

\section{Computational complexity of SPP-SBL}\label{APP: complexity}

The computational complexity of each component of the proposed SPP-SBL algorithm is summarized in Table~\ref{tab:complexity}. The overall per-iteration computational complexity is
$\mathcal{O}(M^2N)$,
which is identical to that of PC-SBL, A-EBSBL and original SBL \cite{ji2008bayesian}. 

\begin{table}[h]
	\centering
	\caption{Per-iteration computational complexity of SPP-SBL}
	\label{tab:complexity}
	\begin{tabular}{l r}
		\hline
		\textbf{Step} & \textbf{Complexity} \\
		\hline
		Posterior mean and covariance update ($\boldsymbol{\mu}, \boldsymbol{\Sigma}$) 
		& $\mathcal{O}(M^2N)$ \\
		Hyperparameter update ($\boldsymbol{\alpha}$) 
		& $\mathcal{O}(N)$ \\
		Coupling parameter update ($\vec{\boldsymbol{\beta}} $) 
		& $\mathcal{O}(N)$ \\
		Noise precision update ($\gamma$) 
		& $\mathcal{O}(MN)$ \\
		\hline
		\textbf{Total per iteration} 
		& $\boldsymbol{\mathcal{O}(M^2N)}$ \\
		\hline
	\end{tabular}
\end{table}

Although SPP-SBL introduces an additional update step for the coupling parameters $\vec{\boldsymbol{\beta}}$, this step only incurs linear complexity in $N$ and is therefore negligible compared to the dominant cost $\mathcal{O}(M^2N)$, i.e., posterior mean and covariance update, which is also involved by almost all SBL-type algorithms.
Moreover, jointly learning $\boldsymbol{\alpha}$ and $\vec{\boldsymbol{\beta}}$ provides a more expressive prior model and empirically leads to faster convergence, as illustrated in Fig.~\ref{time}.

\begin{figure}[ht]
	\begin{center}
		\includegraphics[width=2.3in]{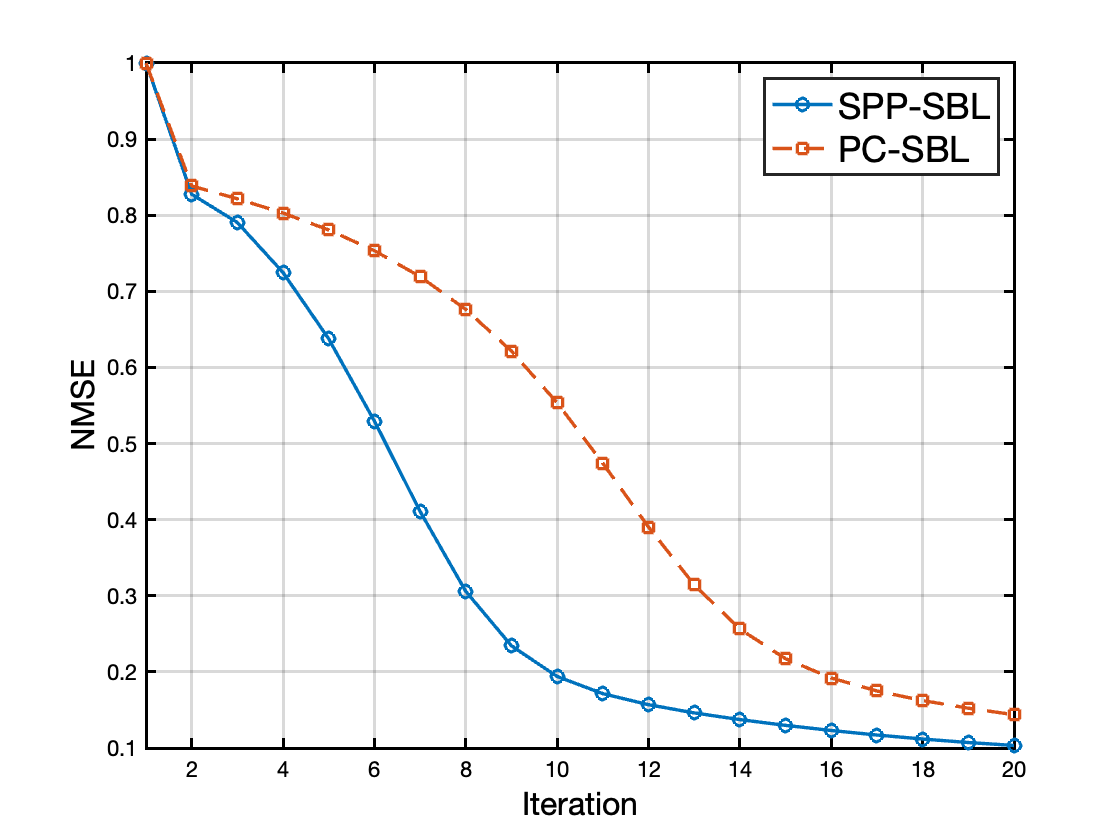}
	\end{center}
	\caption{Convergence result of SPP-SBL and PC-SBL on audio signal}
	\label{time}
\end{figure}

It's observed  that SPP-SBL requires nearly half the number of iterations compared with PC-SBL to reach the same level of reconstruction accuracy. Therefore, SPP-SBL not only has the same per-iteration computational complexity as PC-SBL in theory, but also exhibits clear advantages in terms of practical runtime efficiency and estimation accuracy. A detailed runtime comparison of all considered algorithms on different types of data is further provided in Table~\ref{tab:runtime_all}.

\section{The sensitivity to initialization} \label{App:init}

To evaluate the sensitivity of SPP-SBL to hyperparameter initialization, we performed experiments on four groups of parameters on heteroscedastic block-sparse data: 1. Coupling parameter $\vec{\boldsymbol{\beta}} = \beta \cdot \text{ones}(N-1,1)$, 2. Variance-layer hyperparameters ${\boldsymbol{\alpha}} = \alpha \cdot \text{ones}(N,1)$, 3. Noise precision parameter $\gamma$, 4. Prior parameters $c/d$. 

Fig.~\ref{fig:four_images} shows the reconstruction performance under different initializations for each group. It can be observed that the initialization of the noise precision $\gamma$ may slightly affect the convergence speed, but the algorithm consistently converges to the same solution. The other three groups of parameters ($\vec{\boldsymbol{\beta}}, {\boldsymbol{\alpha}}, c/d$) show minimal sensitivity, with negligible impact on the final performance.

\begin{figure}[h]
	\centering
	\begin{subfigure}[b]{0.48\linewidth}  
		\includegraphics[width=\linewidth]{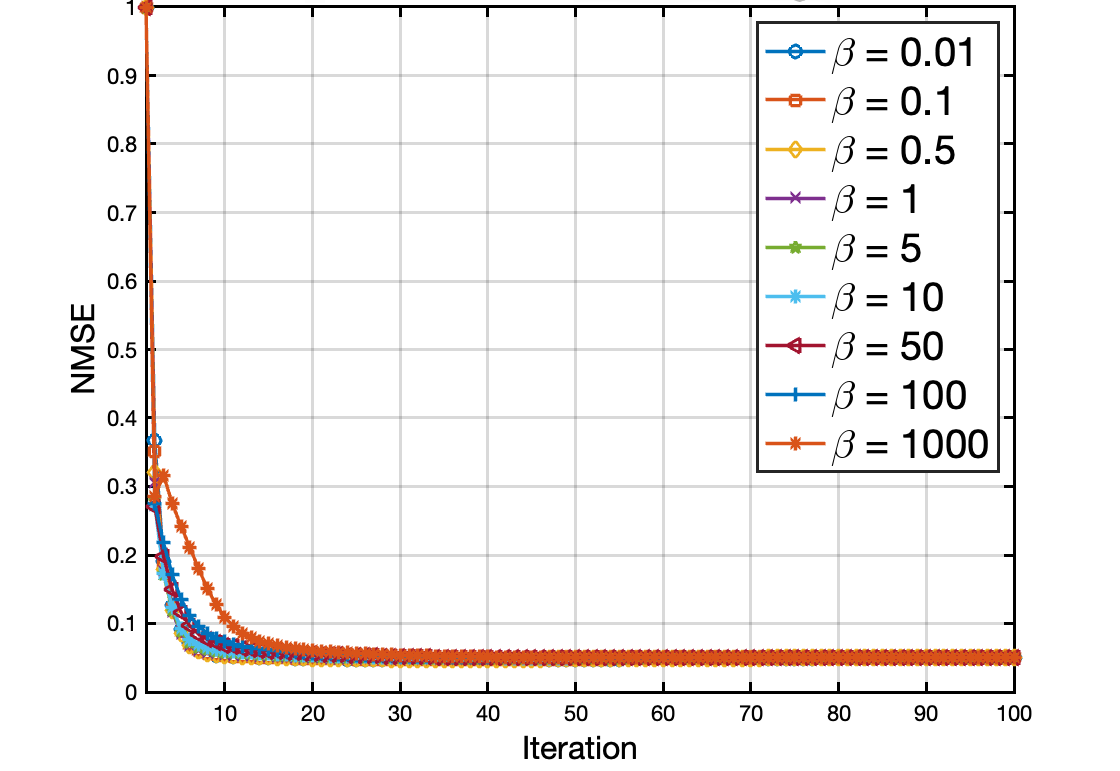}  
		\caption{Coupling parameter $\vec{\boldsymbol{\beta}}$}
		\label{fig:init_beta}
	\end{subfigure}
	\hfill  
	\begin{subfigure}[b]{0.48\linewidth}
		\includegraphics[width=\linewidth]{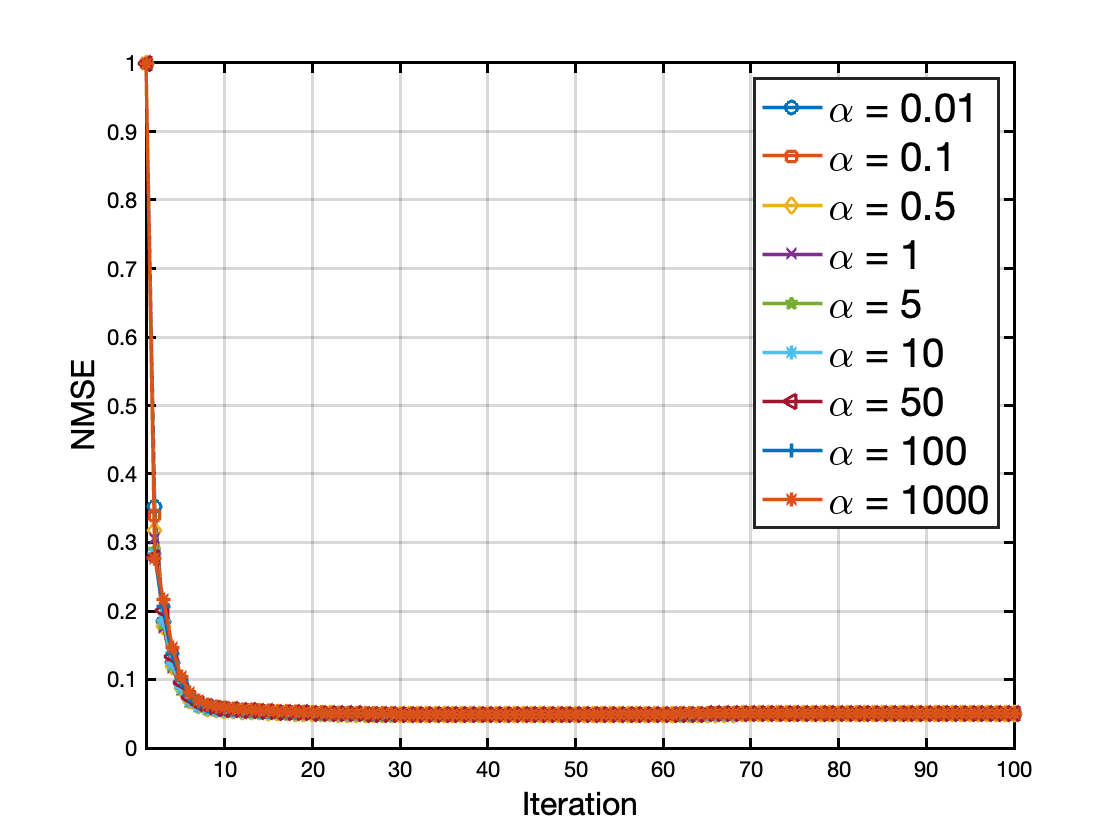}
		\caption{Variance-layer hyperparameters ${\boldsymbol{\alpha}}$}
		\label{fig:init_alpha}
	\end{subfigure}
	
	\vspace{0.5em}  
	
	\begin{subfigure}[b]{0.48\linewidth}
		\includegraphics[width=\linewidth]{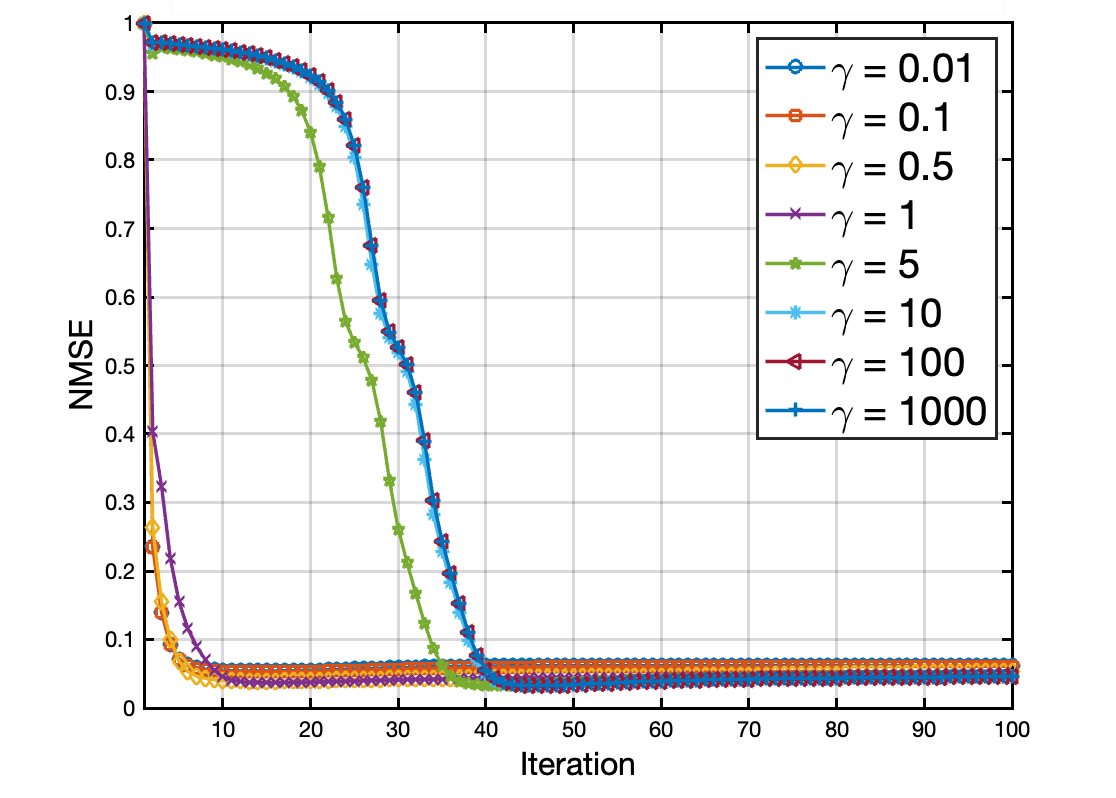}
		\caption{Noise precision $\gamma$}
		\label{fig:init_noise}
	\end{subfigure}
	\hfill
	\begin{subfigure}[b]{0.48\linewidth}
		\includegraphics[width=\linewidth]{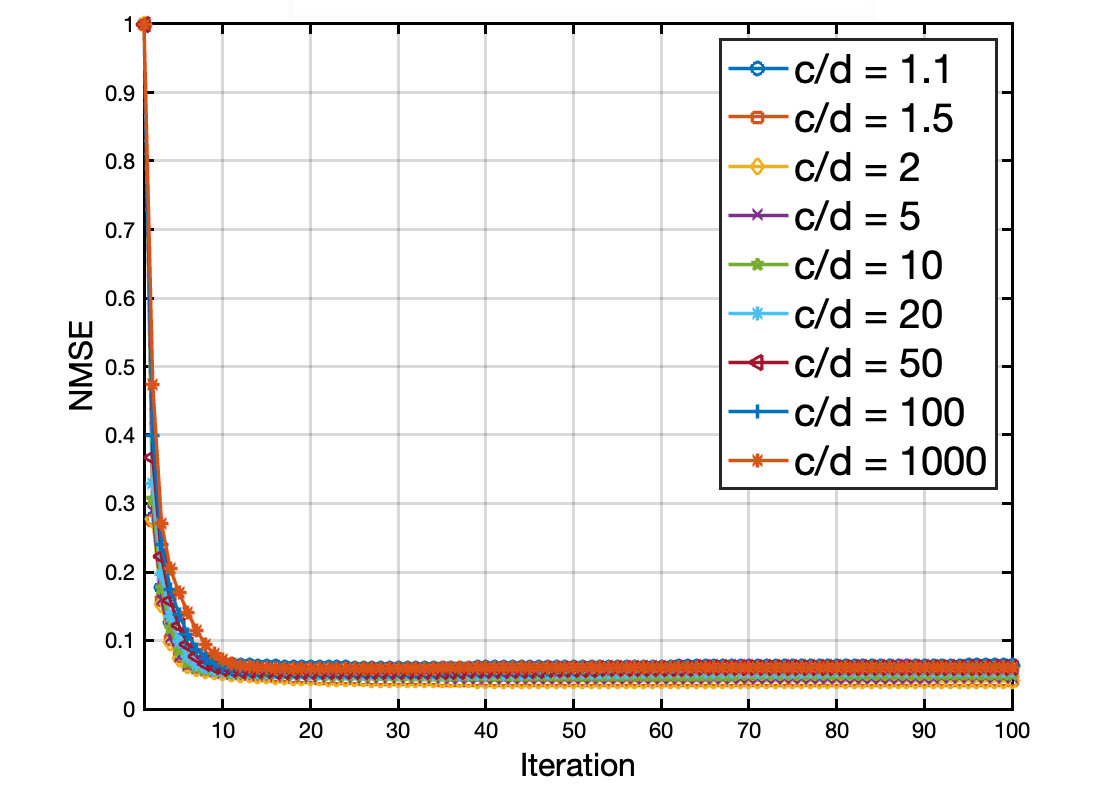}
		\caption{Prior parameters $c/d$}
		\label{fig:init_c_d}
	\end{subfigure}
	
	\caption{Sensitivity analysis of SPP-SBL to hyperparameter initialization.}
	\label{fig:four_images}
\end{figure}

\section{Scalability to high-dimensional signals}\label{App:high_dimen}

To further evaluate the scalability of the proposed SPP-SBL algorithm, we conducted additional experiments on audio data by systematically increasing the signal dimension. Specifically, this was achieved by extracting longer audio segments with lengths
$N = 960, 2400, 4800,$ and $9600$, corresponding to $2\times, 5\times, 10\times,$ and $20\times$ the original dimension ($N = 480$) used in the main experiment in Section~\ref{audioset}. The signal-to-noise ratio (SNR) and compression ratio were fixed at $15$~dB and $25\%$, respectively, throughout these tests.

Table~\ref{tab:dimension} summarizes the quantitative NMSE results across different signal dimensions. As observed, SPP-SBL consistently achieves the lowest reconstruction error and maintains a clear performance advantage over competing methods as the signal dimension increases. These results indicate that the proposed SPP-SBL algorithm scales well to high-dimensional signals while preserving robust and stable reconstruction performance. 
\begin{table}[t]
	\centering
	\caption{Performance comparison (NMSE) across different Audioset signal dimensions $N$. 
		Our method is highlighted in \colorbox{customblue}{blue} and the best results are shown in \textbf{bold}.}
	\label{tab:dimension}
	\small
	\renewcommand{\arraystretch}{1.0}
	\setlength{\tabcolsep}{3pt}
	\begin{tabular}{lcccc}
		\toprule
		\textbf{Algorithm} 
		& $\mathbf{N=960}$ 
		& $\mathbf{N=2400}$ 
		& $\mathbf{N=4800}$ 
		& $\mathbf{N=9600}$ \\
		\midrule
		BSBL         & 0.3132 & 0.2289& 0.1824 & 0.2184 \\
		PC-SBL       & 0.2852 &0.2326 & 0.2287 & 0.2436 \\
		Block-IBA    & 0.2792 & 0.2804& 0.2662& 0.3093\\
		SBL          & 0.3429 & 0.2753& 0.2694 & 0.3222 \\
		Group Lasso  & 0.3924 & 0.2771 & 0.2178& 0.2408 \\
		Group BPDN   & 0.4001 & 0.2817 & 0.2252 & 0.2484 \\
		StructOMP    & 0.2693 & 0.1800 & 0.1965& 0.3796 \\
		Mb-CS       & 0.2630 & 0.2822 & 0.2609 & 0.4760 \\
		RCS-SBL     & 0.2563 & 0.2058 & 0.1954 & 0.2140 \\
		DivSBL      & 0.1992 & 0.2051 & 0.1752 &0.2063 \\
		\rowcolor{customblue}
		SPP-SBL     & \textbf{0.1952} & \textbf{0.1535} & \textbf{0.1404}& \textbf{0.1755} \\
		\bottomrule
	\end{tabular}
\end{table}

\vfill

\end{document}